\documentclass[11pt]{article}

\usepackage{amsfonts,mathrsfs,amsthm,amsmath,graphicx}

\usepackage{enumerate}
\usepackage{bookmark}

\setlength{\textwidth}{6.3in} \setlength{\textheight}{9.25in}
\setlength{\evensidemargin}{0in} \setlength{\oddsidemargin}{0in}
\setlength{\topmargin}{-.3in}

\newtheorem{theorem}{Theorem}
\newtheorem{cor}[theorem]{Corollary}

\newtheorem{ex}[theorem]{Example}
\newtheorem{lemma}[theorem]{Lemma}
\newtheorem{definition}[theorem]{Definition}

\newcommand{\fix}{{\rm fix\,}}
\newcommand{\cyc}{{\rm cyc\,}}
\newcommand{\red}{{\rm red\,}}

\newcommand{\exc}{{\rm exc\,}}
\newcommand{\arcc}{{\rm arc\,}}
\newcommand{\com}{{\rm com\,}}
\newcommand{\down}{{\rm down\,}}
\newcommand{\ver}{{\rm ver\,}}

\newcommand{\cdes}{{\rm cdes\,}}

\newcommand{\nega}{{\rm neg\,}}

\newcommand{\msn}{\mathfrak{S}_n}

\linespread{1.25}

\begin{document}
\title{The cycle descent statistic on permutations}
\author{
Jun Ma$^{a,}$\thanks{Email address: majun904@sjtu.edu.cn
\newline{\hspace*{1.8em}Partially supported by  SRFDP 20110073120068 and NSFC 11571235}}\\
\and Shimei Ma$^{b,}$\thanks{Email address of the corresponding
author: shimeimapapers@163.com
\newline{\hspace*{1.8em}Partially supported by  NSFC 11401083}}\\
 \and Yeong-Nan Yeh$^{c,}$\thanks{Email address: mayeh@math.sinica.edu.tw\newline{\hspace*{1.8em}Partially supported by NSC 104-2115-M-001-010-MY3}}\\
\and Zhu Xu$^{a,}$\thanks{Email address:
s\_j\_z\_x@sjtu.edu.cn}}\date{} \maketitle
\vspace*{-1.2cm}\begin{center} \footnotesize $^{a}$ Department of
Mathematics, Shanghai Jiaotong University,
Shanghai\\
$^{b}$School of mathematics and statistics,
        northeastern university at qinhuangdao,
         Hebei\\
$^{c}$ Institute of Mathematics, Academia Sinica, Taipei\\

\end{center}

 \vspace*{-0.3cm}
\thispagestyle{empty}
\begin{abstract}
In this paper we study the cycle descent statistic on permutations.
Several involutions on permutations and derangements are
constructed. Moreover, we construct a bijection between negative
cycle descent permutations and Callan perfect matchings.
\end{abstract}

\noindent {\bf MSC:} 05A15; 05A19\\
{\bf Keywords:} Permutations; Cycle descents; Perfect matchings;
Eulerian polynomials

\section{Introduction}
Let $\mathfrak{S}_n$ be the symmetric group of all permutations of
$[n]$, where $[n]=\{1,2,\ldots,n\}$. We write an element $\pi$ in
$\mathfrak{S}_n$ as $\pi=\pi(1)\pi(2)\cdots\pi(n)$. An {\it
excedance} in $\pi$ is an index $i$ such that $\pi(i)>i$ and  a {\it
fixed point} in $\pi$ is an index $i$ such that $\pi(i)=i$. A
fixed-point-free permutation is called a {\it derangement}. Denote
by $\mathscr{D}_n$ the set of derangements of $[n]$. As usual, let
$\exc(\pi)$, $\fix(\pi)$ and $\cyc(\pi)$ denote the number of
excedances, fixed points and cycles in $\pi$ respectively. For
example, the permutation $\pi=3142765$ has the cycle decomposition
$(1342)(57)(6)$, so $\cyc(\pi)=3$, $\exc(\pi)=3$ and $\fix(\pi)=1$.

The Eulerian polynomials $A_n(x)$ are defined by
$$A_0(x)=1,\quad A_n(x)=\sum_{\pi\in\msn}x^{\exc(\pi)}\quad\textrm{for $n\ge 1$},$$
and have been extensively investigated.  Foata and Sch\"utzenberger
\cite{FS70} introduced a $q$-analog of the Eulerian polynomials
defined by
\begin{equation*}\label{anxq-def}
A_n(x;q)=\sum_{\pi\in\msn}x^{\exc(\pi)}q^{\cyc(\pi)}.
\end{equation*}
Brenti~\cite{Bre94,Bre00} further studied $q$-Eulerian polynomials
and established the link with $q$-symmetric functions arising from
plethysm. Brenti~\cite[Proposition 7.3]{Bre00} obtained the
exponential generating function for $A_n(x;q)$:
\begin{equation*}\label{anxq-rr}
1+\sum_{n\geq
1}A_n(x;q)\frac{z^n}{n!}=\left(\frac{1-x}{e^{z(x-1)}-x}\right)^q.
\end{equation*}
Remarkably, Brenti~\cite[Corollary 7.4]{Bre00} derived the following
identity:
\begin{equation}\label{Brenti}
\sum_{\pi\in\msn}x^{\exc(\pi)}(-1)^{\cyc(\pi)}=-(x-1)^{n-1}.
\end{equation}
From then on, there is a large of literature devoted to various
generalizations and refinements of the joint distribution of
excedances and cycles (see~\cite{Bagno07,Chen09,Ksavrelof03,Zhao13}
for instance). For example, Ksavrelof and Zeng~\cite{Ksavrelof03}
constructed bijective proofs of~\ref{Brenti} and the following
formula:
\begin{equation*}
\sum_{\pi\in
\mathscr{D}_n}x^{\exc(\pi)}(-1)^{\cyc(\pi)}=-x-x^2-\cdots-x^{n-1}.
\end{equation*} In
particular, their bijection leads to a refinement of the above
identity:
\begin{equation*}
\sum_{\pi\in\mathscr{D}_{n,i}}x^{\exc(\pi)}(-1)^{\cyc(\pi)}=-x^{n-i},
\end{equation*}
where $\mathscr{D}_{n,i}$ is the set of derangements $\pi$ of $[n]$
such that $\pi(n)=i$.

A {\it standard cycle decomposition} of $\pi\in\mathfrak{S}_n$ is
defined by requiring that each cycle is written with its smallest
element first, and the cycles are written in increasing order of
their smallest element. A permutation is said to be cyclic if there
is only one cycle in its cycle decomposition. Let
$(c_1,c_2,\ldots,c_i)$ be a cycle in standard cycle decomposition of
$\pi$. We say that $c_j$ is a {\it cycle descent} if $c_j>c_{j+1}$,
where $1<j<i$. Denote by $CDES(\pi)$ the set of cycle descents of
$\pi$ and let $\cdes(\pi)=|CDES(\pi)|$ be the number of cycle
descents of $\pi$. For example, for $\pi=(1342)(57)(6)$, we have
$CDES(\pi)=\{4\}$ and $\cdes(\pi)=1$. For $\pi\in\msn$, it is clear
that $\exc(\pi)+\cyc(\pi)+\cdes(\pi)=n$. Thus
$$A_n(x;q)=q^n\sum_{\pi\in\msn}\left(\frac{x}{q}\right)^{\exc(\pi)}\left(\frac{1}{q}\right)^{\cdes(\pi)}.$$

Let $\mathfrak{S}_{n,i}$ be the set of permutations
$\pi\in\mathfrak{S}_n$ with $\pi(i)=1$. For any
$\pi\in\mathfrak{S}_{n}$, let $\pi^{-1}$ denote the inverse of
$\pi$, so $\pi^{-1}(1)=i$ if $\pi\in\mathfrak{S}_{n,i}$. For $n\geq
2$, we recently observed the following formulas:
\begin{align*}
\sum_{\pi\in \mathfrak{S}_{n,i}}(-1)^{\cdes(\pi)}t^{\pi^{-1}(1)}&=
\left\{\begin{array}{lll}2^{n-2}t&\text{if}&i=1,\\0&\text{if}&i=2,\ldots,n-1,\\2^{n-2}t^{n}&\text{if}&i=n,\end{array}\right.\\
\sum_{\pi\in
\mathfrak{S}_{n}}(-1)^{\cdes(\pi)}t^{\pi^{-1}(1)}&=2^{n-2}(t+t^{n}),\\
\sum\limits_{\pi\in \mathscr{D}_{n}}x^{exc(\pi)}(-1)^{cdes(\pi)}t^{\pi^{-1}(1)}&=\sum_{i=2}^{n}(-1)^{n-i}x^{i-1}t^i,\\
\sum_{\pi\in
\mathscr{D}_{n}}(-1)^{cdes(\pi)}&=\frac{1}{2}[1-(-1)^{n-1}].
\end{align*}
The above formulas can be easily proved by taking $x=1$ in
Theorem \ref{theorem-p} of Section $2$. Motivated by the these
formulas, we shall study the cycle descent statistic of
permutations. In the next section, we present the main results of
this paper and collect some notation and definitions that will be
needed in the rest of the paper.

\section{Definitions and main results}\label{section02}
Consider the following enumerative polynomials
$$P_{n,i}(x,y,q,t)=\sum\limits_{\pi\in
\mathfrak{S}_{n,i}}x^{\exc(\pi)}y^{\cdes(\pi)}q^{\fix(\pi)}t^{\pi^{-1}(1)}.$$
It is remarkable that the polynomials $P_{n,i}(x,-1,1,t)$ and
$P_{n,i}(x,-1,0,t)$ have simple closed formulas. We state them as
the first main result of this paper.
\begin{theorem}\label{theorem-p}
For $n\geq 2$, we have
\begin{equation}\label{P_{n,i}(x,-1,1,t)}
P_{n,i}(x,-1,1,t)=\left\{\begin{array}{lll}t(1+x)^{n-2}&\text{if}&i=1,\\0&\text{if}&i=2,\ldots,n-1,\\t^{n}x(1+x)^{n-2}&\text{if}&i=n,\end{array}\right.
\end{equation} and
\begin{equation}\label{P_{n,i}(x,-1,0,t)}P_{n,i}(x,-1,0,t)=(-1)^{n-i}x^{i-1}t^i.
\end{equation}
\end{theorem}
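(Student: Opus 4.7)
Since every $\pi \in \mathfrak{S}_{n,i}$ satisfies $\pi^{-1}(1) = i$, the factor $t^{\pi^{-1}(1)} = t^i$ pulls out of both sums, and the theorem reduces to closed-form evaluations of $Q_{n,i}(x) := \sum_{\pi \in \mathfrak{S}_{n,i}} x^{\exc(\pi)} (-1)^{\cdes(\pi)}$ (for \eqref{P_{n,i}(x,-1,1,t)}) and its derangement analogue $Q^d_{n,i}(x)$ (for \eqref{P_{n,i}(x,-1,0,t)}). The cycle-counting identity $\exc + \cyc + \cdes = n$ allows the rewriting $(-1)^{\cdes} = (-1)^{n-\exc-\cyc}$. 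Applied on $\mathfrak{S}_{n-1}$ together with Brenti's identity \eqref{Brenti}, this yields the key preliminary
\[
\sum_{\sigma \in \mathfrak{S}_{n-1}} x^{\exc(\sigma)} (-1)^{\cdes(\sigma)} = (1+x)^{n-2}.
\]

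\textbf{Boundary cases $i=1$ and $i=n$.} When $i = 1$, the element $1$ is a fixed point of $\pi$, and its removal is an $(\exc,\cdes)$-preserving bijection $\mathfrak{S}_{n,1} \to \mathfrak{S}_{n-1}$, so the preliminary gives $Q_{n,1}(x) = (1+x)^{n-2}$. When $i = n$, the cycle of $1$ reads $(1, c_2, \ldots, c_{m-1}, n)$; excising $n$ from this cycle is a bijection $\mathfrak{S}_{n,n} \to \mathfrak{S}_{n-1}$ that preserves $\cdes$ but decreases $\exc$ by exactly one (the unique destroyed excedance is at $c_{m-1}$, where $\pi(c_{m-1}) = n$), whence $Q_{n,n}(x) = x(1+x)^{n-2}$.

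\textbf{Middle cases and derangements.} For $2 \le i \le n-1$, the plan is to construct a fixed-point-free, $\exc$-preserving, sign-reversing involution on $\mathfrak{S}_{n,i}$ that flips the parity of $\cdes$; this immediately forces $Q_{n,i}(x) = 0$. The natural design, in the spirit of the bijective proofs of Ksavrelof and Zeng for Brenti's identity, splices a specifically chosen external cycle into the cycle of $1$ at the slot just before $i$, or performs the inverse extraction. For $Q^d_{n,i}(x)$ with $2 \le i \le n$, the analogous involution restricted to $\mathscr{D}_n \cap \mathfrak{S}_{n,i}$ is expected to have a single fixed point, namely the cyclic permutation $\pi_0 = (1, 2, \ldots, i-1, n, n-1, \ldots, i+1, i)$; direct checks give $\exc(\pi_0) = i-1$ and $\cdes(\pi_0) = n-i$, contributing the unpaired weight $(-1)^{n-i} x^{i-1}$, as claimed.

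\textbf{Main obstacle.} The chief difficulty is the precise specification of the involution: the rule for choosing which cycle to splice and where to splice must simultaneously preserve $\exc$, flip the parity of $\cdes$, be an involution, and have the correct fixed-point structure—empty for $\mathfrak{S}_{n,i}$ with $2 \le i \le n-1$, and exactly $\{\pi_0\}$ for $\mathscr{D}_n \cap \mathfrak{S}_{n,i}$ with $2 \le i \le n$. The $\exc$-change under splicing out a block $(a,d_1,\ldots,d_{k-1})$ with $a = \min$ equals $[c_{j-1}<i] - [c_{j-1}<a] - [d_{k-1}<i]$, which vanishes only under a delicate compatibility between the distinguished element and $i$. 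Small-case checks suggest the rule must split on whether this distinguished element lies below or above $i$, so that the bookkeeping at the two endpoints of the splice slot cancels in all configurations.
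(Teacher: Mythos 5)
Your reduction of the two boundary cases to Brenti's identity is correct and is a genuinely different route from the paper's for those cases: pulling out $t^{i}$, using $\exc(\pi)+\cyc(\pi)+\cdes(\pi)=n$ to rewrite $(-1)^{\cdes}$ in terms of $(-x)^{\exc}(-1)^{\cyc}$, and invoking Brenti's formula does give $\sum_{\sigma\in\mathfrak{S}_{n-1}}x^{\exc(\sigma)}(-1)^{\cdes(\sigma)}=(1+x)^{n-2}$, and your two bijections (delete the fixed point $1$ for $i=1$; excise $n$ from the end of the cycle of $1$ for $i=n$) transport this correctly to $Q_{n,1}=(1+x)^{n-2}$ and $Q_{n,n}=x(1+x)^{n-2}$.

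However, the core of the theorem is not proved. For $2\le i\le n-1$ you only announce that a sign-reversing, $\exc$-preserving involution on $\mathfrak{S}_{n,i}$ is ``to be constructed,'' and for the derangement formula you only identify the intended fixed point $\sigma^{i}=(1,2,\ldots,i-1,n,n-1,\ldots,i)$ and check its weight. Your own ``Main obstacle'' paragraph concedes that the splicing rule has not been pinned down, and the vanishing of the middle terms together with the entire second formula rest on exactly that rule; note also that summing your boundary values against $\sum_{\pi\in\mathfrak{S}_n}x^{\exc(\pi)}(-1)^{\cdes(\pi)}=(1+x)^{n-1}$ only yields $\sum_{i=2}^{n-1}Q_{n,i}=0$, not that each term vanishes. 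The paper closes this gap in two ways, and you should adopt one of them. The cheaper route avoids involutions entirely: for $\pi\in\mathfrak{S}_{n+1,i}$ with $i\ge 2$, look at the cycle $(1,c_1,\ldots,c_m)$ containing $1$ (so $c_m=i$) and delete either the whole $2$-cycle $(1,i)$ when $m=1$, or just the last entry $i$ when $m\ge 2$, recording whether $c_{m-1}<i$ (an excedance is destroyed, factor $x$) or $c_{m-1}>i$ (a cycle descent is destroyed, factor $y$); this gives the recurrence $P_{n+1,i}=xP_{n-1}+x\sum_{j=2}^{i-1}P_{n,j}+y\sum_{j=i}^{n}P_{n,j}$ for both the $q=1$ and $q=0$ versions, from which the two closed forms follow by a short induction at $y=-1$. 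The paper's second route does build the involutions explicitly ($\Psi$ cuts the maximal increasing prefix off the cycle of $1$ and appends it as a new last cycle, or absorbs the last cycle back in the inverse direction; $\varphi_{n,i}$ performs an analogous surgery on the last cycle of a derangement), and the resulting case analysis is precisely the bookkeeping you flagged as the difficulty; without that level of detail the involution approach remains a plan rather than a proof.
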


A signed permutation $(\pi, \phi)$ of $[n]$ is a permutation
$\pi\in\msn$ together with a map $\phi : [n] \mapsto \{+1,-1\}$ and
we call $\phi(i)$ the sign of $i$. For simplicity, we indicate the
sign of $\pi(i)$ by writing $\pi(i)^+$ or $\pi(i)^-$. Let $(\pi,
\phi)$ be a signed permutation. Let $NEG(\pi, \phi)$ be the set of
numbers $\pi(i)$ with the sign $-1$, i.e.
$$NEG(\pi, \phi)=\{\pi(i)\mid \phi(\pi(i))=-1\},$$ and let $neg(\pi,\phi)=|NEG(\pi,
\phi)|$.

\begin{definition}
A negative cycle descent permutation $(\pi, \phi)$ of $[n]$ is a
signed permutation $(\pi, \phi)$ such that $NEG(\pi,\phi)\subseteq
CDES(\pi)$.
\end{definition}

Let
$$b_{n}(y,q)=\sum\limits_{i=1}^nP_{n,i}(1,y,q,1)=\sum\limits_{\pi\in
\mathfrak{S}_{n}}y^{\cdes(\pi)}q^{\fix(\pi)}.$$ It is easy to verify
that $b_n(2,1)$ is the number of negative cycle descent permutations
of $[n]$ since
$$b_n(2,1)=\sum\limits_{\pi\in \mathfrak{S}_{n}}2^{\cdes(\pi)}$$ and
$b_n(2,0)$ is the number of negative cycle descent derangements of
$[n]$ since
  $$b_n(2,0)=\sum\limits_{\pi\in
\mathfrak{S}_{n}}2^{\cdes(\pi)}0^{\fix(\pi)}=\sum\limits_{\pi\in
\mathscr{D}_{n}}2^{\cdes(\pi)}.$$

We present the second main result of this paper as follows.
\begin{theorem}\label{theorem-b}
For $n\geq 1$, we have
\begin{equation}\label{P_{n,i}(1,y,1,1)-recurrent}
b_{n+1}(y,1)=b_{n}(y,1)+\sum_{i=1}^nb_{i}(y,1)\binom{n}{i-1}(y-1)^{n-i}
\end{equation}
with the initial condition $b_1(y,1)=1$,  and
\begin{equation}\label{P_{n,i}(1,y,0,1)-recurrent}
b_{n+1}(y,0)=\sum\limits_{i=0}^{n-1}{\binom{n}{
i}}\left[b_{i+1}(y,0)+b_{i}(y,0)\right](y-1)^{n-i-1}
\end{equation}
with initial conditions  $b_0(y,0)=1, b_1(y,0)=0$.
\end{theorem}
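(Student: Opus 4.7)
The plan is to prove both recurrences at the level of exponential generating functions, starting from a clean cycle-of-$1$ decomposition and then converting to the stated form via a short algebraic identity. Let $f(y,z):=\sum_{n\ge 0}b_n(y,1)z^n/n!$ and $g(y,z):=\sum_{n\ge 0}b_n(y,0)z^n/n!$, and write $E_m(y):=\sum_{\sigma\in\msn}y^{\des(\sigma)}=A_m(y)$ for the classical Eulerian polynomial, with EGF
\[
E(y,z):=\sum_{m\ge 0}E_m(y)\frac{z^m}{m!}=\frac{y-1}{y-e^{(y-1)z}}.
\]
First I would decompose $\pi\in\mathfrak{S}_{n+1}$ according to the cycle $C=(1,c_2,\ldots,c_k)$ containing the element $1$. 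The key point is that, since $c_1=1$ is the minimum of its cycle, the cycle descents inside $C$ are exactly the ordinary descents of the word $c_2c_3\cdots c_k$. Choosing the $k-1$ partners of $1$ in $\binom{n}{k-1}$ ways and summing over their arrangements produces the factor $E_{k-1}(y)$, while the remaining $n-k+1$ letters contribute $b_{n-k+1}(y,\,\cdot\,)$. This yields the ``simple'' recurrences
\[
b_{n+1}(y,1)=\sum_{m=0}^{n}\binom{n}{m}E_m(y)b_{n-m}(y,1),\qquad b_{n+1}(y,0)=\sum_{m=1}^{n}\binom{n}{m}E_m(y)b_{n-m}(y,0),
\]
equivalently $f'=fE$ and $g'=g(E-1)$; in the derangement case the fixed-point term $m=0$ is forbidden, and the $m=n-1$ boundary (corresponding to one leftover element, which can never be a derangement) drops out automatically because $b_1(y,0)=0$.

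Next I would invoke the elementary identity
\[
1-\frac{1}{E(y,z)}=\frac{e^{(y-1)z}-1}{y-1}=\sum_{m\ge 1}(y-1)^{m-1}\frac{z^m}{m!}.
\]
For Part (i), writing $f=f'/E$ gives $f'-f=f'(1-1/E)$; reading off the coefficient of $z^n/n!$ on both sides and re-indexing via $i=n-m+1$ produces \eqref{P_{n,i}(1,y,1,1)-recurrent}. For Part (ii), from $g+g'=gE$ I obtain $g=(g+g')/E$ and hence $g'=(g+g')(1-1/E)$; reading off the coefficient of $z^n/n!$, using the fact that the coefficient of $z^k/k!$ in $g+g'$ is $b_k(y,0)+b_{k+1}(y,0)$, and re-indexing via $i=n-m$, one obtains \eqref{P_{n,i}(1,y,0,1)-recurrent}. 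The stated initial values $b_1(y,1)=1$, $b_0(y,0)=1$, $b_1(y,0)=0$ are immediate from the definitions.

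The main obstacle is bookkeeping rather than anything conceptual. One must justify carefully that the cycle descents of $(1,c_2,\ldots,c_k)$ coincide with the descents of the length-$(k-1)$ word $c_2\cdots c_k$ (this uses both the convention $c_1=1$ and the restriction $1<j<k$ built into the definition of cycle descent), keep the two different $m\leftrightarrow i$ re-parametrizations straight, and check the boundary cases in the derangement recurrence. Once these details are settled, both stated recurrences fall out of the single identity $1-1/E=(e^{(y-1)z}-1)/(y-1)$, so no further combinatorial bijection or Eulerian-polynomial recursion is needed.
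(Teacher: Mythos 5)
Your argument is correct, and it reaches both recurrences by a genuinely different route from the paper. You remove the \emph{entire} cycle containing $1$, observe (correctly, given the convention $c_1=1$ and the restriction $1<j<i$ in the definition of cycle descent) that its contribution is the descent polynomial of the word $c_2\cdots c_k$, and thereby obtain the Eulerian convolutions $f'=Ef$ and $g'=(E-1)g$; the stated recurrences then follow from the single algebraic identity $1-1/E=(e^{(y-1)z}-1)/(y-1)$, and I have checked that both re-indexings ($i=n-m+1$ and $i=n-m$) land exactly on the displayed formulas, with the boundary term $b_1(y,0)=0$ behaving as you say. The paper instead works entirely combinatorially: it interprets $b_n(y,1)$ (for positive integer $y$) as counting pairs $[\pi,\phi]$ with $\phi:CDES(\pi)\to\{0,\dots,y-1\}$, and rather than deleting the whole cycle of $1$ it deletes only a carefully chosen decreasing suffix $B=\{c_{k'+1},\dots,c_l\}$ of that cycle, where $k'$ is determined by the last label $\phi(c_{k'})=0$ in the maximal decreasing tail; the elements of $B\setminus\{c_l\}$ then carry labels in $\{1,\dots,y-1\}$, producing the factors $\binom{n}{|B|}(y-1)^{|B|-1}$ directly, with no Eulerian polynomials appearing at all. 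Your approach buys uniformity (both identities drop out of one generating-function identity) at the price of importing the closed form of the Eulerian EGF; the paper's buys a self-contained bijective explanation of where the factor $(y-1)^{n-i}$ comes from, though it tacitly relies on the fact that an identity of polynomials in $y$ verified at all positive integers holds identically. One small point worth making explicit in a write-up: your reduction from the cycle of $1$ to a word uses that $\cdes$ is additive over cycles and invariant under $\red$, which is immediate but should be stated.
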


By taking $y=2$ in the identity~\ref{P_{n,i}(1,y,1,1)-recurrent}, we
obtain Klazar's recurrence for $w_{12}(n)$
(see~\cite[Eq.~(39)]{Klazar97} for details), which can be written as
follows:\begin{equation}\label{Klazar-recurrence}b_{n+1}(2,1)=b_n(2,1)+\sum\limits_{i=1}^nb_{n+1-i}(2,1){\binom{n}{i}}.\end{equation}
In \cite{Callan10, Klazar97, Ren15}, the sets of some combinatorial
objects, which have cardinality $b_n(2,1)$, were studied. We list
some of them as follows:
\begin{itemize}
  \item [\rm (i)] The set of drawings of rooted plane trees with $n+1$ vertices (see~\cite{Klazar97});
  \item [\rm (ii)] The set of Klazar trees with $n+1$ vertices (see~\cite{Callan10});
  \item [\rm (iii)] The set of perfect matchings on the set $[2n]$ in which no even number is matched to a larger odd number (see~\cite{Callan10}).
  \item [\rm  (iv)] The set of ordered partitions of $[n]$ all of whose left-to-right minima occur at odd locations (see~\cite{Ren15}).
\end{itemize}

Now we begin to introduce the concept of perfect matchings. Let
$\mathbb{P}_A=A\times \{0,1\}$, where $A=\{i_1,\ldots,i_k\}$ is a
finite set of positive integers with $i_1<i_2<\cdots<i_k$. When
$A=[n]$, we write $\mathbb{P}_A$ as $\mathbb{P}_n$. A perfect
matching is a partition of $\mathbb{P}_A$ into $2$-element subsets
or matches.  For any match $\{(i,x),(j,y)\}$ in a perfect matching,
we say that $(i,x)$ is the {\it partner} of $(j,y)$. For
convenience, we represent a perfect matching as a dot diagram with
vertices arranged in two rows.
\begin{ex}\label{example-dot-graph}We give a dot diagram of a perfect matching $M$ of
$\mathbb{P}_8$ as follows:\begin{center}
\includegraphics[width=8cm,height=2.5cm]{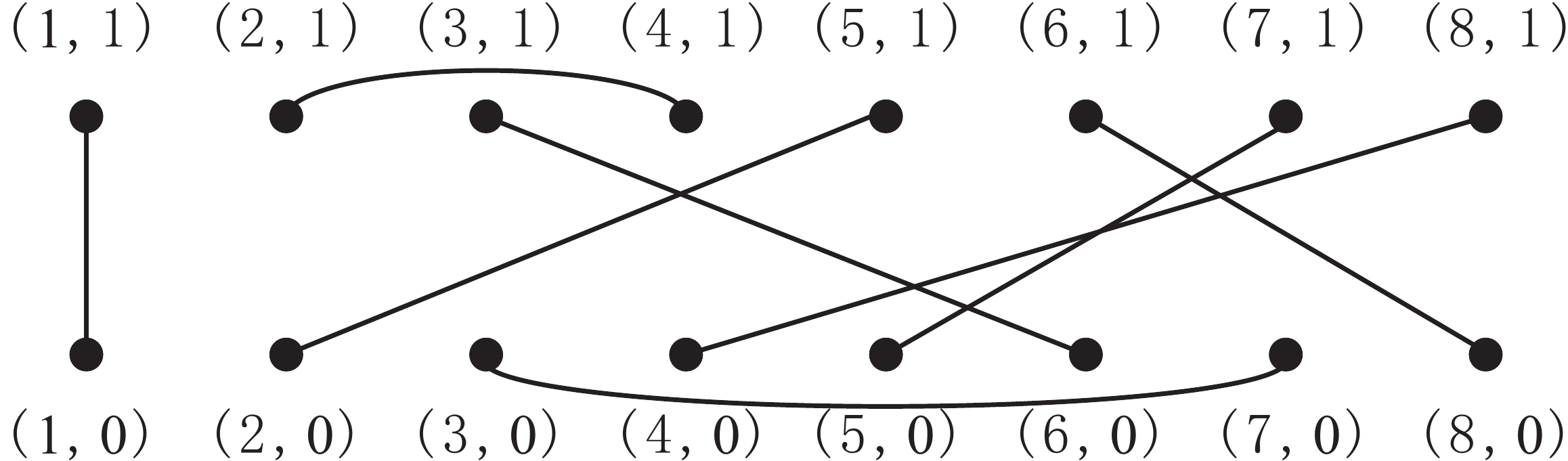}\\
Fig.1. A perfect matching $M$ of $\mathbb{P}_8$
\end{center}
\end{ex}
\noindent Thus, for any perfect matching $M$ of $\mathbb{P}_A$, we
say that $\mathbb{P}_A$ is the vertex set of $M$ and every match is
an edge of $M$. We use $V(M)$ and $E(M)$ to denote vertices set and
edges set in $M$ respectively. Moreover, an edge is called an {\it
arc} if it joins two dots in the same row; otherwise, this edge is
called a {\it line}. For any line $\{(i,0),(j,1)\}$, it is said to
be a {\it upline} if $i<j$, a {\it downline} if $i>j$ and a {\it
vertical} line if $i=j$. For any perfect matching $M$, let
$\arcc(M), \down(M)$ and $\ver(M)$ be the numbers of arc, down lines
and vertical lines in $M$ respectively.
\begin{ex} In the perfect matching of Example~\ref{example-dot-graph},
the edge $\{(1,1),(1,0)\}$ is a vertical line, the edges
$\{(3,1),(6,0)\}$ and $\{(6,1),(8,0)\}$ are two downlines, the edges
$\{(5,1),(2,0)\}$, $\{(7,1),(5,0)\}$ and $\{(8,1),(4,0)\}$ are three
uplines, the edges $\{(2,1),(4,1)\}$ and $\{(3,0),(7,0)\}$ are two
arcs; finally, $arc(M)=2,down(M)=2,ver(M)=1$.
\end{ex}
\begin{definition} A perfect matching $M$ of $\mathbb{P}_n$ is
a Callan perfect matching if $M$ with no uplines.
\end{definition}
\begin{ex}\label{example-callan-matching}
We give a dot diagram of a Callan perfect matching $M$ of
$\mathbb{P}_8$ as follows:\begin{center}
\includegraphics[width=8cm,height=2.5cm]{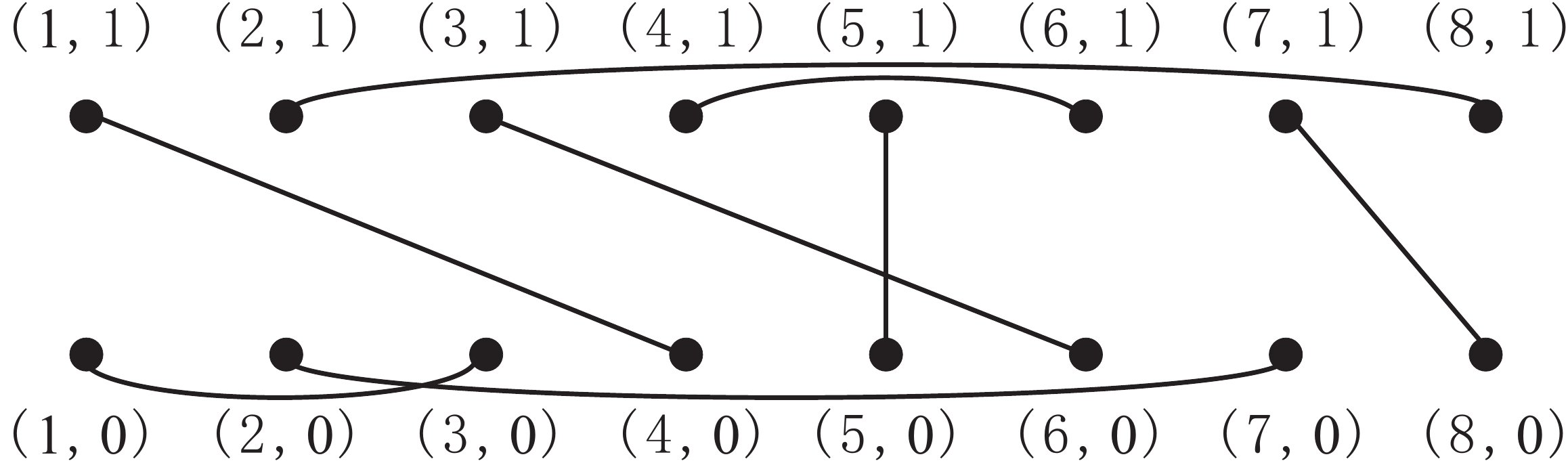}\\
Fig.2. A Callan perfect matching $M$ of $\mathbb{P}_8$
\end{center}
\end{ex}
\noindent Let $m_n$ be the number of Callan perfect matchings of
$\mathbb{P}_n$. Callan~\cite{Callan10} proved that $m_n$ satisfies
the recurrence (\ref{Klazar-recurrence}). So the number of negative
cycle descent permutations of $[n]$ equals to the number of Callan
perfect matching of $\mathbb{P}_n$.

Let $M$ be a perfect matching of $\mathbb{P}_n$. We say that $M'$ is
a {\it sub-perfect matching} of $M$ if $M'$ is a perfect matching
such that $V(M')\subseteq V(M)$ and $E(M')\subseteq E(M)$. For any
$V\subseteq [n]$, if there is a  sub-perfect matching $M'$ of $M$
with $V(M')=V\times \{0,1\}$, then $M'$ is said to be the
sub-perfect matching induced by $V$ and is denoted by $M[V]$.

Denote by $\mathcal {G}(M)$ a graph which is obtained from $M$ by
identifying two vertices $(i,0)$ and $(i,1)$ as a new vertex $i$ for
any $i\in[n]$. It is easy to see that the graph $\mathcal {G}(M)$ is
the union of some disjoint cycles. For a cycle $C$ in
$\mathcal{G}(M)$, suppose $C$ has the vertices set $V$. Note that
there is a sub-perfect matching of $M$ induced by $V$. We say that
$M[V]$ is a connected component of $M$. Let $\com(M)$ be the number
of connected components in a perfect matching $M$. If a perfect
matching $M$ has exactly one connected component, i.e., $\com(M)=1$,
then we say that $M$ is a connected perfect matching.

\begin{ex}For the perfect matching $M$ of Example
\ref{example-callan-matching}, we draw the graph $\mathcal {G}(M)$
as follows:\begin{center}
\includegraphics[width=5.5cm,height=2cm]{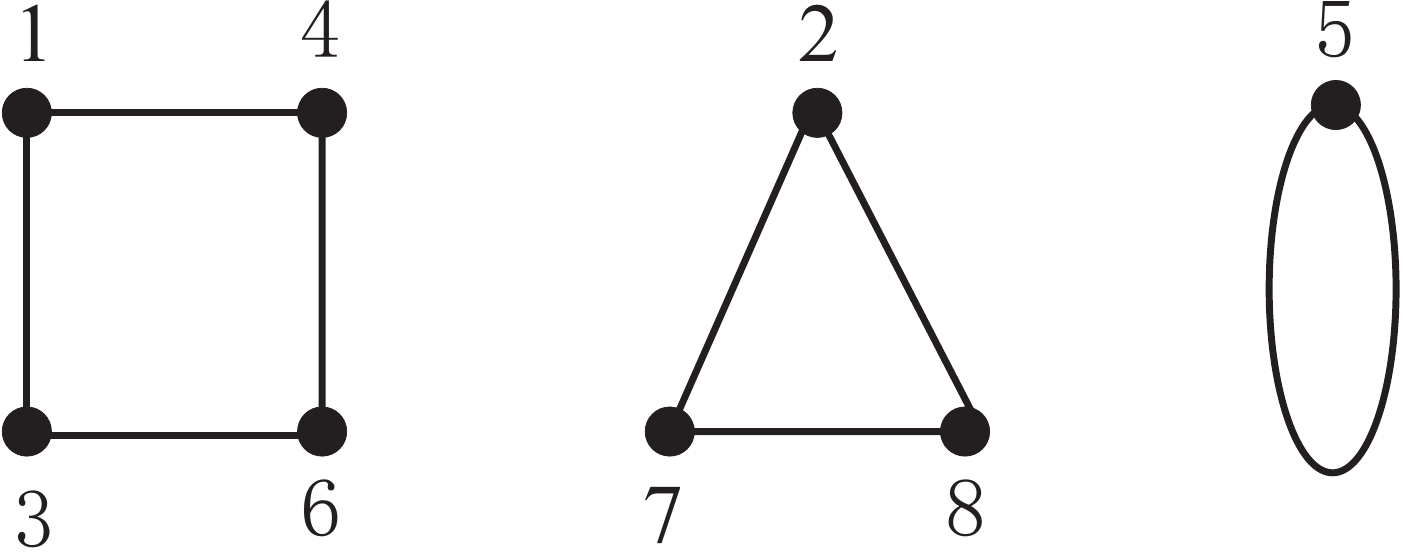}\\
Fig.3. A graph $\mathcal {G}(M)$.
\end{center}
So we have $\com(M)=3$.
\end{ex}

We  state the third main result of this paper as follows.
\begin{theorem}\label{theorem-bijection-p-m}
There is a bijection $\Gamma_n$ between the set of negative cycle
descent permutations of $[n]$ and the set of Callan perfect
matchings of $\mathbb{P}_n$. Moreover, for any negative cycle
descent permutation $(\pi,\phi)$ of $[n]$, we have
$$\com(\Gamma_n(\pi,\phi))=\cyc(\pi),~\ver(\Gamma_n(\pi,\phi))=\fix(\pi),$$ and
$$\down(\Gamma_n(\pi,\phi))=\left\{\begin{array}{ll}\nega(\pi,\phi)&\text{if
}(1,1)\text{ and its partner are in the same
row,}\\\nega(\pi,\phi)+1&\text{otherwise.}\end{array}\right.$$
\end{theorem}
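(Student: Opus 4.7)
My plan is to construct $\Gamma_n$ cycle-by-cycle and then read off the three claimed statistic identities. The standard cycle decomposition of $\pi$ matches naturally with the decomposition of a Callan matching $M$ into the connected components of $\mathcal{G}(M)$, so I would send each signed cycle on a vertex set $S\subseteq[n]$ to a connected Callan sub-matching on $S\times\{0,1\}$ and take the disjoint union over cycles. Since a fixed-point cycle $(c_1)$ is sent to the single vertical line $\{(c_1,0),(c_1,1)\}$, the identities $\com(\Gamma_n(\pi,\phi))=\cyc(\pi)$ and $\ver(\Gamma_n(\pi,\phi))=\fix(\pi)$ will follow automatically.

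To build the component attached to a cycle $(c_1,c_2,\ldots,c_k)$ in standard form (with $c_1$ the smallest), I would assign to each vertex $c_j$ a bit $\alpha(c_j)\in\{0,1\}$ telling which of its two dots is used by the outgoing matching edge toward $c_{j+1}$. The resulting edge is $\{(c_j,\alpha(c_j)),(c_{j+1},1-\alpha(c_{j+1}))\}$, and the component closes cyclically with an analogous edge from $c_k$ back to $c_1$. A direct case check shows that this edge is an upline exactly when $(\alpha(c_j),\alpha(c_{j+1}))=(0,0)$ and $c_j<c_{j+1}$, or $(\alpha(c_j),\alpha(c_{j+1}))=(1,1)$ and $c_j>c_{j+1}$; symmetrically, it is a downline precisely in the two mirror-image cases. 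The rule I would propose is that at each cycle-descent position with $\phi(c_j)=-1$ the pair is forced to $(0,0)$ (producing a downline at a descent), while at every other interior position the edge is made an arc whose top-versus-bottom orientation is then forced by the running $\alpha$-values. This places exactly $\nega(\pi,\phi)$ downlines at the interior positions of all cycles.

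The delicate part is the closing edge $e_k=\{c_k\to c_1\}$, which is always a descent (since $c_1<c_k$ for $k\geq 2$) and whose type is fixed once $\alpha(c_1)$ has been chosen. For any cycle not containing $1$ I would pick $\alpha(c_1)$ so that $e_k$ becomes an arc, contributing no extra downline. For the cycle containing $1$, however, the global parity of ascents, descents, and negative signs around the cycle forces $\alpha(1)$, and the type of $e_k$ is then determined: $e_k$ is a top arc exactly when $(1,1)$'s partner $(c_k,1)$ lies in the same row as $(1,1)$, in which case $\down=\nega$, and $e_k$ is a downline exactly when $(1,1)$'s partner $(c_k,0)$ lies in the opposite row, in which case $\down=\nega+1$. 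The inverse $\Gamma_n^{-1}$ is then recovered by separating a given Callan matching into its components, identifying the smallest vertex $c_1$ of each component, walking around the component to reconstruct the cyclic order $c_1,c_2,\ldots,c_k$, and translating interior downlines back into negative signs at the corresponding cycle descents.

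The main obstacle will be the parity bookkeeping in this closing-edge analysis: propagating the $\alpha$-sequence through mixed ascents, descents, and negatively signed cycle descents around the cycle of $1$, and verifying that the row containing $(1,1)$'s partner toggles between the two cases of the theorem exactly as the closing edge toggles between arc and downline. Once this case analysis is completed, the three statistic identities and the bijectivity of $\Gamma_n$ all follow.
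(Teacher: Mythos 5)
You have the paper's top-level reduction exactly right: both you and the paper build $\Gamma_n$ cycle-by-cycle, send the cycle containing a vertex set $S$ to a connected Callan sub-matching on $S\times\{0,1\}$, and read off $\com=\cyc$ and $\ver=\fix$ from fixed points going to vertical lines. The gap is in your per-cycle construction, and it is not the closing-edge parity issue you flag at the end: your scheme already breaks down strictly in the interior of a cycle. You require (a) that the cyclic order of the component in $\mathcal{G}(M)$ coincide with the cycle order $c_1,\ldots,c_k$ of $\pi$, (b) that the edge $c_j\to c_{j+1}$ be a downline with $(\alpha(c_j),\alpha(c_{j+1}))=(0,0)$ whenever $\phi(c_j)=-1$, and (c) that every other interior edge be an arc, i.e.\ $\alpha(c_j)\neq\alpha(c_{j+1})$. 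These are jointly unsatisfiable. Take the cyclic negative cycle descent permutation $(1^+5^-2^+4^-3^+)$ of $[5]$: the two forced downlines are $\{(5,0),(2,1)\}$ and $\{(4,0),(3,1)\}$, which leave $(2,0)$ and $(4,1)$ as the only free dots for the edge between the consecutive elements $2$ and $4$; that edge is then the line $\{(2,0),(4,1)\}$ with $2<4$, an upline, not an arc. In general, two negative descents whose positions differ by an even amount force contradictory $\alpha$-values through the intervening arcs. A second, smaller flaw: for a cycle not containing $1$ you cannot ``pick $\alpha(c_1)$ so that $e_k$ becomes an arc,'' because once the arc/line types of $e_1,\ldots,e_{k-1}$ are fixed, whether $\alpha(c_k)=\alpha(c_1)$ is already determined and the closing edge's type is not a free choice.

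The paper escapes this precisely by \emph{not} preserving the cycle order. In Lemma~\ref{lemma-cyc-des-callan-match} the cycle is cut into blocks at the positively signed entries; since $NEG(\pi,\phi)\subseteq CDES(\pi)$, each block is a decreasing run $b_{i1}>\cdots>b_{it_i}$, and the $t_i-1$ within-block edges $\{(b_{i,j},0),(b_{i,j+1},1)\}$ are automatically downlines, accounting for exactly $\nega(\pi,\phi)$ downlines in total. The blocks are then joined by arcs placed alternately on the bottom row (connecting the \emph{last} elements of consecutive blocks) and on the top row (connecting the \emph{first} elements), which in effect reverses every other block in the component's cyclic order and guarantees that the two free dots being joined always lie in the same row --- exactly the consistency your construction lacks. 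The closing edge at $(1,1)$ is then a downline or an arc according to the parity of the number of blocks. If you want to keep your $\alpha$-bit formulation, you must let the component's cyclic order be this block-reversed rearrangement of $(c_1,\ldots,c_k)$ rather than the cycle order itself; as stated, your map is not defined on all negative cycle descent permutations, so neither the bijectivity nor the downline count can be established.
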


Let $\Gamma|_{\mathscr{D}_n}$ denote the restriction of $\Gamma_n$
on the set of negative cycle descent derangements of $[n]$. So the
following corollary is immediate.

\begin{cor}\label{cor-bijection-d-p-m}
$\Gamma|_{\mathscr{D}_n}$ is a bijection between the set of negative
cycle descent derangements of $[n]$ and the set of Callan perfect
matchings of $\mathbb{P}_n$ which have no vertical lines.
\end{cor}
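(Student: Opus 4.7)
The plan is to derive the corollary directly from Theorem~\ref{theorem-bijection-p-m} by reading off what happens under the bijection $\Gamma_n$ when we impose the extra constraint $\fix(\pi)=0$. The key input is the statistic-preservation identity $\ver(\Gamma_n(\pi,\phi))=\fix(\pi)$ built into the statement of Theorem~\ref{theorem-bijection-p-m}; nothing about the structure of $\Gamma_n$ itself needs to be re-examined.

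First I would note that, by definition, $\pi\in\mathscr{D}_n$ is equivalent to $\fix(\pi)=0$, so the set of negative cycle descent derangements of $[n]$ is exactly the preimage under $\Gamma_n$ of the set of Callan perfect matchings $M$ of $\mathbb{P}_n$ satisfying $\ver(M)=0$, i.e.\ those with no vertical lines. Explicitly, if $(\pi,\phi)$ is a negative cycle descent derangement, then $\ver(\Gamma_n(\pi,\phi))=\fix(\pi)=0$, so $\Gamma_n(\pi,\phi)$ is a Callan perfect matching with no vertical lines; conversely, if $M$ is a Callan perfect matching of $\mathbb{P}_n$ with no vertical lines, its unique preimage $(\pi,\phi)=\Gamma_n^{-1}(M)$ satisfies $\fix(\pi)=\ver(M)=0$, hence $\pi\in\mathscr{D}_n$.

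Second, since $\Gamma_n$ is already a bijection on the ambient sets, its restriction to any subset of the domain remains injective and lands bijectively onto the corresponding subset of the codomain. Applying this to the subset carved out by $\fix=0$ on one side and $\ver=0$ on the other yields the desired bijection $\Gamma|_{\mathscr{D}_n}$. There is essentially no obstacle: all the combinatorial work is already packaged inside Theorem~\ref{theorem-bijection-p-m}, and the corollary is a one-line consequence of the statistic identity $\ver\circ\Gamma_n=\fix$.
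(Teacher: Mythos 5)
Your proposal is correct and matches the paper's reasoning: the paper states the corollary as an immediate consequence of Theorem~\ref{theorem-bijection-p-m}, precisely because $\ver(\Gamma_n(\pi,\phi))=\fix(\pi)$ identifies derangements ($\fix=0$) with matchings having no vertical lines ($\ver=0$), and restricting a bijection to corresponding subsets of domain and codomain remains a bijection. Nothing further is needed.
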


The rest of this paper is organized as follows. In
Section~\ref{section03} and Section~\ref{section04}, we
prove~\ref{P_{n,i}(x,-1,1,t)} and~\ref{P_{n,i}(x,-1,0,t)} in
Theorem~\ref{theorem-p} respectively. In Section~\ref{section05} and
Section~\ref{section06}, we prove~\ref{P_{n,i}(1,y,1,1)-recurrent}
and ~\ref{P_{n,i}(1,y,0,1)-recurrent} in Theorem~\ref{theorem-b}
respectively. In Section~\ref{section07}, we construct the bijection
$\Gamma_n$ in Theorem~\ref{theorem-bijection-p-m}.

\section{Proof of the explicit formula~\ref{P_{n,i}(x,-1,1,t)} in Theorem~\ref{theorem-p}}\label{section03}
Suppose that $\pi=\pi(i_1)\ldots\pi(i_k)$ is a permutation on the
set $\{i_1,\ldots,i_k\}$ of positive integers with
$i_1<i_2<\cdots<i_k$. Throughout this paper, we always let
$$\red(\pi):=\red(\pi(i_1))\ldots \red(\pi(i_k))\in\mathfrak{S}_k,$$
where $\red$ is a increasing map from $\{i_1,\ldots,i_k\}$ to
$\{1,2,\ldots,k\}$ defined by $\red(i_j)=j$ for any
$j=1,2,\ldots,k$.

Let $P_{n}(x,y,1,1)=\sum\limits_{i=1}^nP_{n,i}(x,y,1,1)$. We give a
recurrence for $P_{n,i}(x,y,1,1)$ in the following lemma.
\begin{lemma}\label{lemma-P(n,i)(x,y,1,1)-recurrent}For any $n\geq 2$, we
have $$P_{n+1,1}(x,y,1,1)=P_{n}(x,y,1,1)$$ and
$$P_{n+1,i}(x,y,1,1)=xP_{n-1}(x,y,1,1)+x\sum\limits_{j=2}^{i-1}P_{n,j}(x,y,1,1)+y\sum\limits_{j=i}^{n}P_{n,j}(x,y,1,1)$$
for $i=2,\ldots,n+1$  with initial conditions
$$P_{1,1}(x,y,1,1)=1,P_{2,1}(x,y,1,1)=1,P_{2,2}(x,y,1,1)=x.$$
\end{lemma}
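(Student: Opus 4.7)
The plan is to prove both identities by bijectively deleting the element 1 from its cycle in $\pi\in\mathfrak{S}_{n+1,i}$.

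For the first identity $P_{n+1,1}(x,y,1,1)=P_n(x,y,1,1)$, every $\pi\in\mathfrak{S}_{n+1,1}$ has 1 as a fixed point. Removing this fixed point and relabeling $\{2,\ldots,n+1\}$ as $[n]$ yields a bijection with $\mathfrak{S}_n$ that preserves both $\exc$ and $\cdes$, since a 1-cycle contributes nothing to either statistic.

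For the second identity with $i\geq 2$, I condition on $m:=\pi(1)=c_2$, where the cycle through 1 in $\pi$ is $(1,c_2,c_3,\ldots,c_{k-1},i)$. The case $m=i$ (equivalently $k=2$) accounts for the transposition $(1,i)$; here I delete both $1$ and $i$ to produce a bijection with $\mathfrak{S}_{n-1}$ under which $\exc(\pi)=\exc(\sigma)+1$ (a new excedance at position 1) and $\cdes(\pi)=\cdes(\sigma)$, yielding the term $xP_{n-1}(x,y,1,1)$. For $m\neq i$ (so $k\geq 3$), I delete only the element 1 from its cycle to obtain $\sigma$ on $\{2,\ldots,n+1\}$ with $\sigma(i)=m$ and $\sigma(j)=\pi(j)$ for $j\neq i$; relabeling yields $\tilde\sigma\in\mathfrak{S}_n$ with $\tilde\sigma(i-1)=m-1$. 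A careful comparison between $\pi$'s cycle through 1 and $\tilde\sigma$'s cycle through $i-1$ (in its standard form, whose leading minimum depends on the relative order of $m$, $i$, and the $c_j$'s) shows that when $m<i$ one has $\exc(\pi)=\exc(\tilde\sigma)+1$ and $\cdes(\pi)=\cdes(\tilde\sigma)$, whereas when $m>i$ one has $\exc(\pi)=\exc(\tilde\sigma)$ and $\cdes(\pi)=\cdes(\tilde\sigma)+1$. Summing over $m$ then produces
\[
P_{n+1,i}(x,y,1,1) = xP_{n-1}(x,y,1,1) + x\sum_{v=1}^{i-2}\Sigma_v + y\sum_{v=i}^{n}\Sigma_v,
\]
where $\Sigma_v := \sum_{\tilde\sigma\in\mathfrak{S}_n,\,\tilde\sigma(i-1)=v}x^{\exc(\tilde\sigma)}y^{\cdes(\tilde\sigma)}$.

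The main obstacle will be to convert this intermediate recurrence into the stated form, which instead indexes the $P_{n,j}$'s by $\tilde\sigma^{-1}(1)=j$ rather than by $\tilde\sigma(i-1)=v$. I plan to establish the required identities $\sum_{v=1}^{i-2}\Sigma_v = \sum_{j=2}^{i-1}P_{n,j}(x,y,1,1)$ and $\sum_{v=i}^{n}\Sigma_v = \sum_{j=i}^{n}P_{n,j}(x,y,1,1)$ via a secondary statistic-preserving bijection that rotates the cycle of $\tilde\sigma$ containing $i-1$ so as to relocate the element 1 to the prescribed position while both $\exc$ and $\cdes$ are unchanged. Verifying that the ensuing standard-form readjustment does not perturb either statistic—especially when the minimum of the affected cycle shifts during the rotation—constitutes the delicate technical heart of the proof.
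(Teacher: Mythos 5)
Your first identity and your treatment of the transposition case $(1,i)$ are fine, and your bookkeeping for how $\exc$ and $\cdes$ change when the element $1$ is deleted from its cycle is correct: the deleted adjacencies $i\to 1$ and $1\to m$ contribute nothing and one excedance respectively, the new adjacency $i\to m$ contributes an excedance or a cycle descent according to whether $m>i$ or $m<i$, and re-rooting the shortened cycle at its new minimum costs exactly the one ``free'' descent into the minimum. The genuine gap is the conversion step you defer to the end. Deleting $1$ leaves you with a recurrence indexed by the \emph{value} $\tilde\sigma(i-1)=m-1$, whereas the lemma is indexed by the \emph{position} $\tilde\sigma^{-1}(1)=j$, and the identities $\sum_{v=1}^{i-2}\Sigma_v=\sum_{j=2}^{i-1}P_{n,j}(x,y,1,1)$ and $\sum_{v=i}^{n}\Sigma_v=\sum_{j=i}^{n}P_{n,j}(x,y,1,1)$ assert an equidistribution of the joint statistic $(\exc,\cdes)$ over $\{\sigma:\sigma(i-1)<i-1\}$ versus $\{\sigma:2\le\sigma^{-1}(1)\le i-1\}$ that is essentially as hard as the lemma itself; you do not establish it. Moreover the mechanism you propose for it, ``rotating the cycle of $\tilde\sigma$ containing $i-1$,'' is not a well-defined operation on permutations (a cycle denotes the same permutation wherever you start writing it), so the technical heart of the argument is missing rather than merely delicate.

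The paper avoids this issue by deleting the \emph{other} endpoint of the cycle through $1$: writing that cycle in standard form as $(1,c_1,\ldots,c_m)$, the condition $\pi\in\mathfrak{S}_{n+1,i}$ forces $c_m=i$, and the paper removes $c_m=i$ rather than $1$. The truncated cycle $(1,c_1,\ldots,c_{m-1})$ is still in standard form, so no re-rooting occurs, and the reduced permutation lands in $\mathfrak{S}_{n,j}$ or $\mathfrak{S}_{n,j-1}$ where $j=c_{m-1}$ is precisely the new preimage of $1$; the recurrence therefore comes out directly in the stated form, with the same dichotomy $j<i$ (lose an excedance) versus $j>i$ (lose a cycle descent). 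If you switch your deletion from $1$ to $i$, your computation of the statistic changes goes through verbatim and the argument closes with no auxiliary equidistribution lemma.
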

\begin{proof}
For any $\pi=\pi(1)\pi(2)\ldots\pi(n+1)\in\mathfrak{S}_{n+1,1}$, we
have $\pi(1)=1$. Let $\tilde{\pi}=\pi(2)\ldots\pi(n+1)$. Then
$\tilde{\pi}$ is a permutation on the set $\{2,3,\ldots,n\}$ and
$\red(\tilde{\pi})\in\mathfrak{S}_n$. Obviously,
$$\exc(\pi)=\exc(\red(\tilde{\pi}))\text{ and }\cdes(\pi)=\cdes(\red(\tilde{\pi})).$$
So we have $P_{n+1,1}(x,y,1,1)=P_n(x,y,1,1)$.

For any $i\geq 2$, let
$\pi=\pi(1)\pi(2)\ldots\pi(n+1)\in\mathfrak{S}_{n+1,i}$. Let
$\sigma=(1,c_1,c_2,\ldots,c_m)$ be the cycle in the standard cycle
decomposition of $\pi$ which contains the entry $1$. So $\pi$ can be
split into the cycle $\sigma$ and a permutation $\tau$ on the set
$\{1,2,\ldots,n+1\}\setminus\{1,c_1,\ldots,c_m\}$, i.e.,
$\pi=\sigma\cdot\tau$. Clearly, $m\geq 1$, $i\geq 2$ and $c_m=i$
since $\pi\in\mathfrak{S}_{n+1,i}$. We distinguish the following two
cases:

{\bf Case 1.} $m=1$.

Deleting the cycle $(1,c_1)=(1,i)$ from the standard cycle
decomposition of $\pi$, we obtain the permutation
$$\tau=\pi(2)\ldots\pi(i-1)\pi(i+1)\ldots\pi(n+1)$$ which is defined on the
set $\{1,2,\ldots,n+1\}\setminus\{1,i\}$. Note that
$\red(\tau)\in\mathfrak{S}_{n-1}$ and
$$\exc(\pi)=\exc(\red(\tau))+1,\cdes(\pi)=\cdes(\red(\tau)).$$
This provides the term $$xP_{n-1}(x,y,1,1).$$

{\bf Case 2.} $m\geq 2$.

Suppose that $c_{m-1}=j$ for some $2\leq j\leq n+1$. Deleting the
number $c_m=i$ from  the standard cycle decomposition of $\pi$, we
obtain a permutation
$$\tilde{\pi}=(1,c_1,\ldots,c_{m-1})\cdot\tau$$ which is defined on
the set $\{1,\ldots,i-1,i+1,\ldots,n+1\}$. Hence
$\red(\tilde{\pi})\in\mathfrak{S}_{n}$. Moreover, if $c_{m-1}=j\leq
i-1$, then
$$\red(\tilde{\pi})\in\mathfrak{S}_{n,j},
\exc(\pi)=\exc(\red(\tilde{\pi}))+1,\cdes(\pi)=\cdes(\red(\tilde{\pi})).$$
This provides the term $$x\sum\limits_{j=2}^{i-1}P_{n,j}(x,y,1,1).$$
If $c_{m-1}=j\geq i+1$, then
$$\red(\tilde{\pi})\in\mathfrak{S}_{n,j-1},
\exc(\pi)=\exc(\red(\tilde{\pi})),
\cdes(\pi)=\cdes(\red(\tilde{\pi}))+1.$$ This provides the term
$$y\sum\limits_{j=i}^{n}P_{n,j}(x,y,1,1).$$
In conclusion, for any $i\geq 2$ we have $$P_{n+1,i}(x,y,1,1)=
xP_{n-1}(x,y,1,1)+x\sum\limits_{j=2}^{i-1}P_{n,j}(x,y,1,1)+y\sum\limits_{j=i}^{n}P_{n,j}(x,y,1,1).$$
\end{proof}

\noindent{\bf A proof of the identity~\ref{P_{n,i}(x,-1,1,t)} in Theorem \ref{theorem-p}:}\\
 Note that
$$\sum_{\pi\in
\mathfrak{S}_{n,i}}x^{\exc(\pi)}(-1)^{\cdes(\pi)}t^{\pi^{-1}(1)}=t^{i}\sum_{\pi\in
\mathfrak{S}_{n,i}}x^{\exc(\pi)}(-1)^{\cdes(\pi)}=t^iP_{n,i}(x,-1,1,1).$$
In order to prove the identity (\ref{P_{n,i}(x,-1,1,t)}) in Theorem
\ref{theorem-p},  it is sufficient to show that
\begin{equation}\label{equation-b(n,i)(x,-1)-formula-proof}
P_{n,i}(x,-1,1,1)=\left\{\begin{array}{lll}(1+x)^{n-2}&\text{if}&i=1,\\0&\text{if}&i=2,\ldots,n-1,\\x(1+x)^{n-2},&\text{if}&i=n.\end{array}\right.
\end{equation}

\noindent{\bf (i) An induction proof of the explicit
formula~\ref{equation-b(n,i)(x,-1)-formula-proof}.}
\begin{proof} It is easy to verify that
$$P_{2,1}(x,-1,1,1)=1,P_{2,2}(x,-1,1,1)=x.$$ Assume that the explicit formula~\ref{equation-b(n,i)(x,-1)-formula-proof} holds
for any $2\leq k\leq n$. By
Lemma~\ref{lemma-P(n,i)(x,y,1,1)-recurrent}, we have
\begin{eqnarray*}P_{n+1,1}(x,-1,1,1)&=&P_n(x,-1,1,1)\\
&=&P_{n,1}(x,-1,1,1)+P_{n,n}(x,-1,1,1)\\
&=&(1+x)^{n-2}+x(1+x)^{n-2}=(1+x)^{n-1},\end{eqnarray*}

\begin{eqnarray*}&&P_{n+1,n+1}(x,-1,1,1)\\&=&xP_{n-1}(x,-1,1,1)+x\sum\limits_{j=2}^{n}P_{n,j}(x,-1,1,1)-\sum\limits_{j=n+1}^{n}P_{n,j}(x,-1,1,1)\\
&=&xP_{n-1,1}(x,-1,,1,1)+xP_{n-1,n-1}(x,-1,1,1)+xP_{n,n}(x,-1,1,1)\\
&=&x(1+x)^{n-2}+x^2(1+x)^{n-2}=x(1+x)^{n-1}\end{eqnarray*} and
\begin{eqnarray*}&&P_{n+1,i}(x,-1,1,1)\\&=&xP_{n-1}(x,-1,1,1)+x\sum\limits_{j=2}^{i-1}P_{n,j}(x,-1,1,1)-\sum\limits_{j=i}^{n}P_{n,j}(x,-1,1,1)\\
&=&xP_{n-1,1}(x,-1,1,1)+xP_{n-1,n-1}(x,-1,1,1)-P_{n,n}(x,-1,1,1)\\
&=&x(1+x)^{n-2}-P_{n,n}(x,-1,1,1)=0\end{eqnarray*} for any $2\leq
i\leq n$.

\noindent{\bf (ii) A bijection proof of the explicit formula~\ref{equation-b(n,i)(x,-1)-formula-proof}.}\\
Now we give a bijective proof
of~\ref{equation-b(n,i)(x,-1)-formula-proof} by establishing an
involution $\psi_{n,i}$ on $\mathfrak{S}_{n,i}$.

For any $\pi\in\mathfrak{S}_n$, suppose that $\pi=C_1\ldots C_k$ is
the standard cycle decomposition of $\pi$. Let
$$\hat{\pi}=a_1a_2\ldots a_n$$ be the permutation obtained from $\pi$
by erasing the parentheses in its standard cycle decomposition. If
$\hat{\pi}\neq 12\ldots n$, then $\hat{\pi}$ has some {\it
top-descent}, i.e., the number $a_i$ satisfying $a_i>a_{i+1}$, and
let $q_{\pi}$ be the {\it last top-descent} which appears in the
sequence $\hat{\pi}$. For example, the permutation $\pi=1472365$ in
$\mathfrak{S}_7$ has the standard cycle decomposition
$(1)(24)(375)(6)$,  so $\hat{\pi}=1243756$,  it has exactly two
top-descents $4$ and $7$, and $q_{\pi}=7$.

We define a map $\Phi:\mathfrak{S}_n\mapsto\mathfrak{S}_n$ as
follows:

For any $\pi\in\mathfrak{S}_n$, if $q_{\pi}$ is the last element of
a cycle $C_i$ for some $i$, then let $\Phi(\pi)$ be the permutation
obtained from $\pi$ by erasing the right and left parentheses ``)("
after the number $q_{\pi}$ in the standard cycle decomposition of
$\pi$; otherwise, let $\Phi(\pi)$ be the permutation obtained from
$\pi$ by inserting a right parentheses ``)" and a left parentheses
``(" after the number $q_{\pi}$ in the standard cycle decomposition
of $\pi$. For example, if $\pi=(1)(24)(375)(6)$, then
$\hat{\pi}=1243756$ and $q_{\pi}=7$, and so
$\Phi(\pi)=(1)(24)(37)(5)(6)$. If $\sigma=(1)(24)(37)(5)(6)$, then
$\Phi(\sigma)=(1)(24)(375)(6)$. Clearly, we have
$$\hat{\pi}=\widehat{\Phi(\pi)},~q_{\pi}=q_{\Phi(\pi)},~\exc(\pi)=\exc(\Phi(\pi)),~\cdes(\pi)-\cdes(\Phi(\pi))=\pm1.$$

Denote by $\Omega_{n,1}$ the set of the permutations
$\pi\in\mathfrak{S}_{n,1}$ such that $\hat{\pi}=123,\ldots n$. It is
easy to verify that the weight of $\Omega_{n,1}$ is $(1+x)^{n-2}$.
For any  permutation $\pi\in
\mathfrak{S}_{n,1}\setminus{\Omega}_{n,1}$, we have
$\Phi(\pi)\in\mathfrak{S}_{n,1}\setminus{\Omega}_{n,1}$. So for any
$\pi\in\mathfrak{S}_{n,1}$, let
$$\psi_{n,1}(\pi)=\left\{\begin{array}{lll}\Phi(\pi)&\text{if}&\pi\in\mathfrak{S}_{n,1}\setminus{\Omega}_{n,1},\\
\pi&\text{if}&\pi\in\Omega_{n,1}.\end{array}\right.$$

Hence,
\begin{eqnarray*}
\sum\limits_{\pi\in\mathfrak{S}_{n,1}}x^{\exc(\pi)}(-1)^{\cdes(\pi)}&=&\sum\limits_{\pi\in \Omega_{n,1}}x^{\exc(\pi)}(-1)^{\cdes(\pi)}+\sum\limits_{\pi\in \mathfrak{S}_{n,1}\setminus{\Omega}_{n,1}}x^{\exc(\pi)}(-1)^{\cdes(\pi)}\\
&=&\sum\limits_{\pi\in
\Omega_{n,1}}x^{\exc(\pi)}(-1)^{\cdes(\pi)}=\sum\limits_{\pi\in
\Omega_{n,1}}x^{\exc(\pi)}=(1+x)^{n-2}.
\end{eqnarray*}
For example, we list all $\pi\in\mathfrak{S}_{4,1}$ and
$\psi_{4,1}(\pi)$ in Table 1. $$\begin{array}{|l|l|l|l|l|}
\hline \pi\in\mathfrak{S}_{4,1}&x^{\exc(\pi)}(-1)^{\cdes(\pi)}&\hat{\pi}&q_\pi&\psi_{4,1}(\pi)\\
\hline (1)(2)(3)(4)&1&1234&&(1)(2)(3)(4)\\
\hline (1)(23)(4)&x&1234&&(1)(23)(4)\\
\hline (1)(2)(34)&x&1234&&(1)(2)(34)\\
\hline (1)(234)&x^2&1234&&(1)(234)\\
\hline (1)(24)(3)&x&1243&4&(1)(243)\\
\hline (1)(243)&-x&1243&4&(1)(24)(3)\\
\hline\end{array}$$\begin{center}Table.1. Involution
$\psi_{4,1}$\end{center}

For $2\leq i\leq n$, denote by $\mathcal{A}_{n,i}$ the set of
permutations $\pi\in\mathfrak{S}_{n,i}$ such that the number $q_\pi$
isn't in the first cycle in the standard cycle decomposition of
$\pi$. Let $\mathcal{A}_{n,i}^*=\mathfrak{S}_{n,i}\setminus
\mathcal{A}_{n,i}$ for short. For any $\pi\in\mathcal{A}_{n,i}^*$,
let $\pi=C_1\ldots C_k$ be the standard cycle decomposition of
$\pi$, suppose the length of the cycle $C_1$ is $l+1$ and
$$\hat{\pi}=1,\ldots,i,a_1,\ldots,a_{n-l-1}.$$
Then we have
$$a_1<a_2<\ldots<a_{n-l-1}$$ since $q_\pi$ is an element
in the cycle $C_1$.

Now suppose that $C_1=(1,c_{11},\ldots, c_{1l})$. Let $Q$ be the set
of indices $j\in\{1,2,\ldots,l\}$ such that $c_{1j}$ is not the
largest number in the set
$\{1,2,\ldots,n\}\setminus\{c_{1,j+1},\ldots,c_{1l}\}$, i.e.,
$$Q=\{j\mid 1\leq j\leq l\text{ and } c_{1j}<\max
\{1,2,\ldots,n\}\setminus\{c_{1,j+1},\ldots,c_{1l}\} \}.$$ Let
$\Omega_{n,i}$ be the set of permutations
$\pi\in\mathcal{A}^*_{n,i}$ such that
 $Q=\emptyset$.
For any $i=2,\ldots,n-1$, we have $l\in Q$ since $i<n$, and so
$\Omega_{n,i}=\emptyset$. Moreover, $\pi\in\Omega_{n,n}$ if and only
if
$$\hat{\pi}=1,k,\ldots n,2,3,\ldots,k-1$$ for some $k>1$.

We define a map $\Psi$ from $\mathcal{A}^*_{n,i}\setminus
\Omega_{n,i}$ to itself as follows:

For any $\pi\in\mathcal{A}^*_{n,i}\setminus \Omega_{n,i}$, let
$m=m_{\pi}=\min Q$ since $Q\neq\emptyset$. Then we have
$$c_{11}<\cdots<c_{1,m-1}\text{ and }c_{1,m-1}>c_{1m}.$$
Furthermore, we distinguish the following two cases:

{\bf Case 1.} $2\leq m\leq l$.

Let $$\Psi(\pi)=(1,c_{1m},\ldots,c_{1l})\cdot C_2\ldots C_k\cdot
(c_{11},\ldots,c_{1,m-1}).$$ Then $\Psi(\pi)$ has at least two
cycles, $m_{\Phi(\pi)}=1$ since $c_{1m}<c_{1,m-1}$, and so
$\Psi(\pi)\in\mathcal{A}^*_{n,i}\setminus \Omega_{n,i}$. Moreover,
we have $\exc(\pi)=\exc(\Psi(\pi))$ and
$\cdes(\pi)=\cdes(\Psi(\pi))+1$

{\bf Case 2.} $m=1$ and there are at least two cycles in the
standard cycle decomposition of $\pi$.

Suppose that $C_1=(1,c_{11},\ldots, c_{1l})$ and
$C_k=(c_{k1},\ldots, c_{ks})$ are the first cycle and the last cycle
in the standard cycle decomposition of $\pi$ respectively, where $s$
is the length of the cycle $C_k$. Let $$\Psi(\pi)=(1,c_{k1},\ldots,
c_{ks},c_{11},\ldots,c_{1l})\cdot C_2\ldots C_{k-1}.$$ Then
$$m_{\Psi(\pi)}=s+1\geq 2,$$ and so
$\Psi(\pi)\in\mathcal{A}^*_{n,i}\setminus \Omega_{n,i}$. Moreover,
we have $\exc(\pi)=\exc(\Psi(\pi))$ and
$\cdes(\pi)=\cdes(\Psi(\pi))-1$

When $2\leq i\leq n-1$, for any $\pi\in\mathfrak{S}_{n,i}$, let
$$\psi_{n,i}(\pi)=\left\{\begin{array}{lll}\Phi(\pi)&\text{if}&\pi\in\mathcal{A}_{n,i}\\
\Psi(\pi)&\text{if}&\pi\in\mathfrak{S}_{n,i}\setminus\mathcal{A}_{n,i}\end{array}\right..$$
For example, we list all $\pi\in\mathfrak{S}_{4,2}$ and
$\psi_{4,2}(\pi)$ in Table 2, and $\pi\in\mathfrak{S}_{4,3}$ and
$\psi_{4,3}(\pi)$ in Table 3.$$\begin{array}{|l|l|l|l|l|l|}
\hline \pi\in\mathfrak{S}_{4,2}&x^{\exc(\pi)}(-1)^{\cdes(\pi)}&\hat{\pi}&q_\pi&m_\pi&\psi_{4,2}(\pi)\\
\hline (12)(3)(4)&x&1234&&1&(142)(3)\\
\hline (142)(3)&-x&1423&4&2&(12)(3)(4)\\
\hline (12)(34)&x^2&1234&&1&(1342)\\
\hline (1342)&-x^2&1342&4&3&(12)(34)\\
\hline (1432)&x&1432&3&2&(132)(4)\\
\hline (132)(4)&-x&1324&3&1&(1432)\\
\hline\end{array}$$\begin{center}Table.2. Involution $\psi_{4,2}$
$$\begin{array}{|l|l|l|l|l|l|}
\hline \pi\in\mathfrak{S}_{4,3}&x^{\exc(\pi)}(-1)^{\cdes(\pi)}&\hat{\pi}&q_\pi&m_\pi&\psi_{4,3}(\pi)\\
\hline (13)(2)(4)&x&1324&3&1&(143)(2)\\
\hline (143)(2)&-x&1432&3&2&(13)(2)(4)\\
\hline (13)(24)&x^2&1324&3&1&(1243)\\
\hline (1243)&-x^2&1243&4&3&(13)(24)\\
\hline (1423)&-x^2&1423&4&2&(123)(4)\\
\hline (123)(4)&x^2&1234&&1&(1423)\\
\hline\end{array}$$Table.3. Involution $\psi_{4,3}$\end{center}

Hence,
\begin{eqnarray*}\sum\limits_{\pi\in\mathfrak{S}_{n,i}}x^{\exc(\pi)}(-1)^{\cdes(\pi)}&=&\sum\limits_{\pi\in \mathcal{A}_{n,i}}x^{\exc(\pi)}(-1)^{\cdes(\pi)}+\sum\limits_{\pi\in \mathfrak{S}_{n,i}\setminus\mathcal{A}_{n,i}}x^{\exc(\pi)}(-1)^{\cdes(\pi)}
=0.\end{eqnarray*}

When $i=n$, it is not hard to see that the weight of $\Omega_{n,n}$
is $x(1+x)^{n-2}$. For any $\pi\in\mathfrak{S}_{n,n}$, let
$$\psi_{n,n}(\pi)=\left\{\begin{array}{lll}\Phi(\pi)&\text{if}&\pi\in\mathcal{A}_{n,n}\\
\Psi(\pi)&\text{if}&\pi\in\mathfrak{S}_{n,n}\setminus(\mathcal{A}_{n,n}\cup\Omega_{n,n})\\
\pi&\text{if}&\pi\in\Omega_{n,n}\end{array}\right..$$ For example,
we list all $\pi\in\mathfrak{S}_{4,4}$ and $\psi_{4,4}(\pi)$ in
Table 4.$$\begin{array}{|l|l|l|l|l|l|}
\hline \pi&x^{\exc(\pi)}(-1)^{\cdes(\pi)}&\hat{\pi}&q_\pi&m_\pi&\psi_{4,4}(\pi)\\
\hline (14)(2)(3)&x&1423&4&&(14)(2)(3)\\
\hline (14)(23)&x^2&1423&4&&(14)(23)\\
\hline (134)(2)&x^2&1342&4&&(134)(2)\\
\hline (1234)&x^3&1234&&&(1234)\\
\hline
\hline (124)(3)&x^2&1243&4&1&(1324)\\
\hline (1324)&-x^2&1324&3&2&(124)(3)\\
\hline\end{array}$$\begin{center}Table.4. Involution
$\psi_{4,4}$\end{center}

 Hence,
\begin{eqnarray*}&&\sum\limits_{\pi\in\mathfrak{S}_{n,n}}x^{\exc(\pi)}(-1)^{\cdes(\pi)}\\
&=&\sum\limits_{\pi\in \mathcal{A}_{n,n}}x^{\exc(\pi)}(-1)^{\cdes(\pi)}+\sum\limits_{\pi\in \mathfrak{S}_{n,n}\setminus(\mathcal{A}_{n,n}\cup\Omega_{n,n})}x^{\exc(\pi)}(-1)^{\cdes(\pi)}+\sum\limits_{\pi\in \Omega_{n,n}}x^{\exc(\pi)}(-1)^{\cdes(\pi)}\\
&=&\sum\limits_{\pi\in
\Omega_{n,n}}x^{\exc(\pi)}(-1)^{\cdes(\pi)}=x(1+x)^{n-2}.\end{eqnarray*}

\end{proof}

\section{Proof of the explicit formula~\ref{P_{n,i}(x,-1,0,t)} in Theorem~\ref{theorem-p}}\label{section04}
Let $P_{n}(x,y,0,1)=\sum\limits_{i=1}^nP_{n,i}(x,y,0,1)$. We first
give the recurrence for $P_{n}(x,y,0,1)$.

\begin{lemma}\label{lemma-P(n,i)(x,y,0,1)-recurrent}For any $n\geq 2$ and $2\leq i\leq n+1$,
we have
\begin{equation*}P_{n+1,i}(x,y,0,1)=xP_{n-1}(x,y,0,1)+x\sum\limits_{j=2}^{i-1}P_{n,j}(x,y,0,1)+y\sum\limits_{j=i}^{n}P_{n,j}(x,y,0,1).\end{equation*}
\end{lemma}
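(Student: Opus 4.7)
\textbf{Proof plan for Lemma \ref{lemma-P(n,i)(x,y,0,1)-recurrent}.} The strategy is to mirror the argument used for Lemma \ref{lemma-P(n,i)(x,y,1,1)-recurrent}, restricting attention to derangements so that only the $q^{0}$ contribution survives under the specialization $q=0$. Since $\pi(1)=1$ forces $1$ to be a fixed point, every $\pi\in\mathfrak{S}_{n+1,1}$ contributes $0$ to $P_{n+1,1}(x,y,0,1)$, which is why the recurrence is stated only for $i\geq 2$. For $i\geq 2$ I would take a derangement $\pi\in\mathfrak{S}_{n+1,i}$ and isolate the cycle $\sigma=(1,c_1,\ldots,c_m)$ containing $1$ in the standard cycle decomposition, noting that $c_m=i$ and $m\geq 1$, so that $\pi=\sigma\cdot\tau$ with $\tau$ a permutation of $\{1,\ldots,n+1\}\setminus\{1,c_1,\ldots,c_m\}$.

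The two cases split exactly as in Lemma \ref{lemma-P(n,i)(x,y,1,1)-recurrent}. In Case $1$ ($m=1$), the cycle is $(1,i)$, which has no fixed points; removing it yields $\tau$ defined on a set of $n-1$ elements, and since $\red$ merely relabels, $\tau$ is a derangement iff $\red(\tau)\in\msn_{n-1}$ is. The statistics change by $\exc(\pi)=\exc(\red(\tau))+1$ and $\cdes(\pi)=\cdes(\red(\tau))$, producing the contribution $xP_{n-1}(x,y,0,1)$. In Case $2$ ($m\geq 2$), write $c_{m-1}=j$ and delete $c_m=i$ to form $\tilde{\pi}=(1,c_1,\ldots,c_{m-1})\cdot\tau$; the shortened cycle still has length $m\geq 2$, hence contributes no fixed points, and $j\neq 1$ so no spurious fixed point is created at the former end of the cycle. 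Thus $\tilde\pi$ is a derangement iff $\pi$ is, and $\red(\tilde\pi)$ lies in $\mathfrak{S}_{n,j}$ when $j\leq i-1$ (with $\exc$ up by $1$, $\cdes$ unchanged) or in $\mathfrak{S}_{n,j-1}$ when $j\geq i+1$ (with $\exc$ unchanged, $\cdes$ up by $1$). Summing over admissible $j$ gives the remaining two terms $x\sum_{j=2}^{i-1}P_{n,j}(x,y,0,1)$ and $y\sum_{j=i}^{n}P_{n,j}(x,y,0,1)$.

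The only substantive point to verify, and the part I would write out most carefully, is that the ``$q=0$ survives'' condition is preserved by the construction in both directions: both $\sigma$ and the shortened cycle are of length $\geq 2$, $\red$ commutes with the fixed-point statistic, and the inverse map, which re-inserts $i$ at the end of the cycle containing $1$, neither creates nor destroys fixed points. Given this, assembling the three contributions yields the claimed recurrence. I do not anticipate any genuine obstacle beyond this bookkeeping, since the combinatorial decomposition is identical to the one already established; the only delicate issue is confirming that no boundary case (such as the cycle $(1,i)$ itself collapsing, or $j=1$ arising) can introduce a fixed point.
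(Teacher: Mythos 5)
Your proposal is correct and follows essentially the same route as the paper: decompose $\pi\in\mathscr{D}_{n+1,i}$ by isolating the cycle containing $1$, split into the cases where that cycle has length $2$ (contributing $xP_{n-1}(x,y,0,1)$) or length $\geq 3$ (contributing the two sums according to whether $c_{m-1}<i$ or $c_{m-1}>i$), exactly as in the paper's proof. Your added check that the shortened cycle still has length at least $2$, so the derangement property is preserved in both directions, is the right point to verify and is implicit in the paper's argument.
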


\begin{proof}
 For any $\pi=\pi(1)\pi(2)\ldots\pi(n+1)\in\mathscr{D}_{n+1,i}$, let
$\sigma=(1,c_1,c_2,\ldots,c_l)$ be the cycle in the standard cycle
decomposition of $\pi$ which contains the number $1$. So $\pi$ can
be split into the cycle $\sigma$ and a permutation $\tau$ on the set
$\{1,2,\ldots,n+1\}\setminus\{1,c_1,\ldots,c_l\}$, i.e.,
$\pi=\sigma\cdot\tau$. Clearly, $l\geq 1$, $i\geq 2$ and $c_l=i$
since $\pi\in\mathfrak{S}_{n+1,i}$. We distinguish the following two
cases:

{\bf Case 1.} $l=1$.

Deleting the cycle $(1,c_1)=(1,i)$ from the standard cycle
decomposition of $\pi$, we obtain the permutation
$$\tau=\pi(2)\ldots\pi(i-1)\pi(i+1)\ldots\pi(n+1)$$ which is defined on the
set $\{2,\ldots,i-1,i+1,\ldots,n+1\}$. Note that
$\red(\tau)\in\mathscr{D}_{n-1}$,
$$\exc(\pi)=\exc(\red(\tau))+1\text{ and }\cdes(\pi)=\cdes(\red(\tau)).$$
This provides the term $$xP_{n-1}(x,y,0,1).$$

{\bf Case 2.} $l\geq 2$.

Suppose that $c_{l-1}=j$ for some $2\leq j\leq n+1$. Deleting the
number $c_l=i$ from  the standard cycle decomposition of $\pi$, we
obtain a permutation
$$\tilde{\pi}=(1,c_1,\ldots,c_{l-1})\cdot\tau$$ which is defined on
the set $\{1,\ldots,i-1,i+1,\ldots,n+1\}$. Note that
$\red(\tilde{\pi})\in\mathscr{D}_{n}$. Moreover, if $c_{l-1}=j\leq
i-1$, then
$$\red(\tilde{\pi})\in\mathscr{D}_{n,j},
\exc(\pi)=\exc(\red(\tilde{\pi}))+1,\cdes(\pi)=\cdes(\red(\tilde{\pi})).$$
This provides the term $$x\sum\limits_{j=2}^{i-1}P_{n,j}(x,y,0,1).$$
If $c_{l-1}=j\geq i+1$, then
$$\red(\tilde{\pi})\in\mathscr{D}_{n,j-1},
\exc(\pi)=\exc(\red(\tilde{\pi})),\cdes(\pi)=\cdes(\red(\tilde{\pi}))+1.$$
This provides the term
$$y\sum\limits_{j=i}^{n}P_{n,j}(x,y,0,1).$$
Thus, for any $i\geq 2$ we have $$P_{n+1,i}(x,y,0,1)=
xP_{n-1}(x,y,0,1)+x\sum\limits_{j=2}^{i-1}P_{n,j}(x,y,0,1)+y\sum\limits_{j=i}^{n}P_{n,j}(x,y,0,1).$$\end{proof}

\noindent{\bf A proof of the identity~\ref{P_{n,i}(x,-1,0,t)} in
Theorem~\ref{theorem-p}:}
\begin{proof} Note that
$$\sum_{\pi\in
\mathscr{D}_{n,i}}x^{\exc(\pi)}(-1)^{\cdes(\pi)}t^{\pi^{-1}(1)}=t^{i}\sum_{\pi\in
\mathscr{D}_{n,i}}x^{\exc(\pi)}(-1)^{\cdes(\pi)}=t^iP_{n,i}(x,-1,0,1).$$
Therefore, it is sufficient to show that
\begin{equation}\label{equation-P(n,i)(x,-1,0,1)-formula-proof}P_{n,i}(x,-1,0,1)=(-1)^{n-i}x^{i-1}\end{equation} for any $n\geq
2$ and $2\leq i\leq n$.

\noindent{\bf (i) An induction proof of the explicit formula~\ref{equation-P(n,i)(x,-1,0,1)-formula-proof}:}\\
It is easy to check that
$$P_{2,2}(x,-1,0,1)=x.$$ Assume that the formula holds
for any $2\leq k\leq n$. By
Lemma~\ref{lemma-P(n,i)(x,y,0,1)-recurrent}, we have
\begin{eqnarray*}&&
P_{n+1,i}(x,-1,0,1)\\&=&xP_{n-1}(x,-1,0,1)+x\sum\limits_{j=2}^{i-1}P_{n,j}(x,-1,0,1)-\sum\limits_{j=i}^{n}P_{n,j}(x,-1,0,1)\\
&=&x\sum\limits_{j=2}^{n-1}P_{n-1,j}(x,-1,0,1)+x\sum\limits_{j=2}^{i-1}P_{n,j}(x,-1,0,1)-\sum\limits_{j=i}^{n}P_{n,j}(x,-1,0,1)\\
&=&\sum\limits_{j=2}^{n-1}(-1)^{n-1-j}x^j+\sum\limits_{j=2}^{i-1}(-1)^{n-j}x^j-\sum\limits_{j=i}^{n}(-1)^{n-j}x^{j-1}\\
&=&(-1)^{n+1-i}x^{i-1}
\end{eqnarray*}
for any $2\leq i\leq n$.

\noindent{\bf (ii) A bijective proof of the explicit formula~\ref{equation-P(n,i)(x,-1,0,1)-formula-proof}:}\\
Next we give a bijective proof of the explicit
formula~\ref{equation-P(n,i)(x,-1,0,1)-formula-proof} by
establishing an involution $\varphi_{n,i}$ on $\mathscr{D}_{n,i}$.
Fix $i\in\{2,\ldots,n\}$. By definition, the weight of each
$\pi\in\mathscr{D}_{n,i}$ is $(-1)^{\cdes(\pi)}x^{\exc(\pi)}$, hence
the weight of the cyclic permutation
$$\sigma^i=(1,2,\ldots,i-1,n,n-1,\ldots,i)\in\mathscr{D}_{n,i}$$ is $(-1)^{n-i}x^{i-1}$.

For any $\pi\in\mathscr{D}_{n,i}$, suppose that $\pi=C_1\ldots C_k$
is the standard cycle decomposition of $\pi$ and
$$C_k=(c_{k1},\ldots,c_{ks}).$$
We distinguish the following three cases:

 {\bf Case 1.} $k=1$ and
$C_k=(1,2,\ldots,i-1,n,n-1,\ldots,i)$.

Then let $\varphi_{n,i}(\pi)=\pi$.

{\bf Case 2.} $k\geq 2$ and $\red(C_k)=
(1,2,\ldots,r-1,s,s-1,\ldots,r)$ for some $r=2,3,\ldots,s$.

Suppose that $C_{k-1}=(c_{k-1,1},c_{k-1,2},\ldots,c_{k-1,t})$ and
$c_{k,j}$ is the largest number in the set
$\{c_{k,1},c_{k,2},\ldots,c_{k,l_k}\}$ for some
$j\in\{1,2,\ldots,l_k\}$. If $c_{k-1,2}<c_{k,j-1}$, then let
$$\varphi_{n,i}(\pi)=C_1\ldots
C_{k-2}\cdot(c_{k-1,1},c_{k1},c_{k2},\ldots,c_{ks},c_{k-1,2},\ldots,c_{k-1,t}),$$
and so we have $$\exc(\pi)=\exc(\varphi_{n,i}(\pi))\text{ and
}\cdes(\pi)=\cdes(\varphi_{n,i}(\pi))+1;$$ if $c_{k-1,2}>c_{k,j-1}$
then let
$$\varphi_{n,i}(\pi)=C_1\ldots
C_{k-2}\cdot(c_{k-1,1},c_{k1},\ldots,c_{k,j-2},c_{kj}\ldots,c_{ks},c_{k,j-1},c_{k-1,2},\ldots,c_{k-1,t}),$$
and so we have $$\exc(\pi)=\exc(\varphi_{n,i}(\pi))\text{ and
}\cdes(\pi)=\cdes(\varphi_{n,i}(\pi))+1.$$

{\bf Case 3.} $\red(C_k)\neq (1,2,\ldots,r-1,s,s-1,\ldots,r)$ for
any $r=2,3,\ldots,s$.

There exists a unique index $\tilde{s}$ such that
$$\red(c_{k1},c_{k2},\ldots,c_{k\tilde{s}})=1,2,\ldots,{r}-1,\tilde{s},\tilde{s}-1,\ldots,{r}$$
for some ${r}=2,3,\ldots,\tilde{s}$ and
$$\red(c_{k1},c_{k2},\ldots,c_{k,\tilde{s}+1})\neq 1,2,\ldots,\tilde{r}-1,\tilde{s}+1,\tilde{s},\ldots,\tilde{r}$$
for any $\tilde{r}=2,3,\ldots,\tilde{s}+1$. It is easy to check
$3\leq \tilde{s}\leq s-1$. Moreover, suppose that $c_{kj}$ is the
largest number in the set $\{c_{k1},c_{k2},\ldots,c_{k\tilde{s}}\}$.
Then we have $$c_{k,\tilde{s}+1}<c_{k,j-1}\text{ or
}c_{k,\tilde{s}+1}>c_{k\tilde{s}}.$$ If
$c_{k,\tilde{s}+1}<c_{k,j-1}$ then
$$\varphi_{n,i}(\pi)=C_1\ldots
C_{k-2}\cdot(c_{k1},c_{k,\tilde{s}+1},\ldots,c_{ks})\cdot(c_{k2},\ldots,c_{k\tilde{s}}),$$
 we have $$\exc(\pi)=\exc(\varphi_{n,i}(\pi))\text{ and }\cdes(\pi)=\cdes(\varphi_{n,i}(\pi))-1;$$ if $c_{k,\tilde{s}+1}>c_{k\tilde{s}}$
 then  let
$$\varphi_{n,i}(\pi)=C_1\ldots
C_{k-2}\cdot(c_{k1},c_{k,\tilde{s}+1},\ldots,c_{ks})\cdot(c_{k2},\ldots,c_{k,j-1},c_{k,\tilde{s}},c_{kj},\ldots,c_{k,\tilde{s}-1}),$$
we have $$\exc(\pi)=\exc(\varphi_{n,i}(\pi))\text{ and
}\cdes(\pi)=\cdes(\varphi_{n,i}(\pi))-1.$$ For the case with $n=4$,
we list all $\pi$ and $\varphi_{n,i}(\pi)$ in Table. 5.
$$\begin{array}{||l|l||l|l||l|l||}\hline
\pi\in\mathscr{D}_{4,2}&\varphi_{4,2}(\pi)&\pi\in\mathscr{D}_{4,3}&\varphi_{4,4}(\pi)&\pi\in\mathscr{D}_{4,4}&\varphi_{4,4}(\pi)\\
\hline (12)(34)&(1342)&(13)(24)& (1423)&(14)(23)&(1324)\\
\hline (1342)&(12)(34)&(1423)&(13)(24)&(1324)&(14)(23) \\
\hline (1432)&(1432)&(1423)&(1423)&(1234)&(1234) \\
\hline\end{array}$$\begin{center}Table. 5. Involutions
$\varphi_{n,i}(\pi)$ for $n=4$\end{center}

Hence,
\begin{eqnarray*}
\sum\limits_{\pi\in\mathscr{D}_{n,i}}x^{\exc(\pi)}(-1)^{\cdes(\pi)}&=&(-1)^{n-i}x^{i-1}+\sum\limits_{\pi\in\mathscr{D}_{n,i}\setminus
\{\sigma^i\}}x^{\exc(\pi)}(-1)^{\cdes(\pi)}
=(-1)^{n-i}x^{i-1}.\end{eqnarray*}\end{proof}

\section{Proof of the recurrence relation~\ref{P_{n,i}(1,y,1,1)-recurrent}}\label{section05}
Suppose that $y$ is a positive integer. Let $\mathfrak{S}_n(y)$
denote the set of pairs $[\pi,\phi]$ such that
$\pi\in\mathfrak{S}_n$ and $\phi$ is a map from the set $CDES(\pi)$
to the set $\{0,1,\cdots,y-1\}$. So,
$b_{n}(y,1)=|\mathfrak{S}_{n}(y)|$.

For any $[\pi,\phi]\in\mathfrak{S}_{n+1}(y)$, we distinguish the
following two cases:

 {\bf Case 1.} $\pi(1)=1$.

 Let $\tau=\pi(2)\ldots\pi(n+1)$. Then $\tau$ is a permutation defined on the set
 $\{2,3,\ldots,n+1\}$
 and
$$\red(\tau)\in\mathfrak{S}_{n}.$$ Define a map
$\phi':[n]\mapsto\{+1,-1\}$ by letting $\phi'(i)=\phi(\red^{-1}(i))$
for $i=1,2,\ldots,n$. Then
$$[\red(\tau),\phi']\in\mathfrak{S}_n(y),$$ and so this provides the
term $b_n(y,1)$.

 {\bf Case 2.} $\pi(1)\neq 1$.

Let $\sigma=(1,c_1,c_2,\ldots,c_l)$ be the cycle in the standard
cycle decomposition of $\pi$ which contains the number $1$. So,
$\pi$ is split into the cycle $\sigma$ and a permutation $\tau$ on
the set $\{1,2,\ldots,n+1\}\setminus\{1,c_1,\ldots,c_l\}$, i.e.,
$\pi=\sigma\cdot\tau$. Clearly, $l\geq 1$ since $\pi(1)\neq 1$.

Note that there is a unique index $k\geq 1$ which satisfies
$c_{k-1}<c_k$ and $c_{k}>c_{k+1}>\cdots
>c_{l}$. For the sequence $c_k\ldots c_l$, if
$\phi(c_i)=0$ for some $k\leq i\leq l-1$ then let $k'$ be the
largest index in $\{k,k+1,\ldots,l-1\}$ such that $\phi(c_{k'})=0$;
otherwise, $k'=k-1$. Let
$$\sigma'=(1,c_1,\ldots,c_{k'})\text{ and }\pi'=\sigma'\cdot \tau.$$
Then $\pi'$ is a permutation defined on the set $[n+1]\setminus B$,
where $$ B=\{c_{k'+1},\ldots,c_l\},$$ and
$$\red(\pi')\in\mathfrak{S}_{n+1-|B|}.$$
Define a map $\phi':[n+1-|B|]\mapsto\{+1,-1\}$ by letting
 $$\phi'(i)=\phi(\red^{-1}(i))$$ for any $1\leq i\leq n+1-|B|$. Then
   $$[\red(\pi'),\phi']\in\mathfrak{S}_{n+1-|B|}(y).$$

Note that $1\leq |B|\leq n$ and $B\setminus\{c_l\}\subseteq
CDES_{n+1}(\pi)$. For any $k\leq i\leq l-1$, let
$\theta(c_i)=\phi(c_i)$. Then $\theta$ is a map from the set
$\{c_{k'+1},\ldots,c_{l-1}\}$ to $\{1,2,\ldots,y-1\}$. So there are
${\binom{n}{|B|}}$ ways to form the set $B$ and $(y-1)^{|B|-1}$ ways
to form the map $\theta$. This provides the term
$$\sum\limits_{i=1}^nb_{n+1-i}(y,1){\binom{n}{i}}(y-1)^{i-1}$$

Hence we derive the recurrence relation
\begin{eqnarray*}b_{n+1}(y,1)&=&b_n(y,1)+\sum\limits_{i=1}^nb_{n+1-i}(y,1){\binom{n}{i}}(y-1)^{i-1}\\
&=&b_n(y,1)+\sum\limits_{i=1}^nb_{i}(y,1){\binom{n}{i-1}}(y-1)^{n-i}.\end{eqnarray*}

\section{Proof of the recurrence relation~\ref{P_{n,i}(1,y,0,1)-recurrent}}\label{section06}
Clearly, we have $b_0(y,0)=1$ and $b_1(y,0)=0$. Suppose that $y$ is
a positive integer. Let $\mathscr{D}_n(y)$ denote the set of pairs
$[\pi,\phi]$ such that $\pi\in\mathscr{D}_n$ and $\phi$ is a map
from the set $CDES(\pi)$ to the set $\{0,1,\cdots,y-1\}$. Hence
$b_{n}(y,0)=|\mathscr{D}_{n}(y)|$.

For any $[\pi,\phi]\in\mathscr{D}_{n+1}(y)$, let
$\sigma=(1,c_1,c_2,\ldots,c_l)$ be the cycle in the standard cycle
decomposition of $\pi$ which contains the number $1$. So, $\pi$ is
split into the cycle $\sigma$ and a permutation $\tau$ on the set
$[n+1]\setminus\{1,c_1,\ldots,c_l\}$, i.e., $\pi=\sigma\cdot\tau$.
Clearly,  $l\geq 1$ since $\pi(1)\neq 1$.

Note that there is a unique index $k\geq 1$ which satisfies
$c_{k-1}<c_k$ and $c_{k}>c_{k+1}>\cdots
>c_{l}$. For the sequence $c_k\ldots c_l$, if
$\phi(c_i)=0$ for some $k\leq i\leq l-1$ then let $k'$ be the
largest index in $\{k, k+1,\ldots,l-1\}$ such that $\phi(c_{k'})=0$;
otherwise, $k'=k-1$.

We distinguish the following two cases:

{\bf Case 1.} $k'=0$

Let $$B=\{c_{1},\ldots,c_l\}.$$ Note that $\tau$ is a permutation
defined on the set $[n+1]\setminus \{1,c_1,\ldots,c_l\}$ and
$$\red(\tau)\in \mathfrak{S}_{n-|B|}.$$ Define a map
$\phi':[n-|B|]\mapsto\{+1,-1\}$ by letting
$$\phi'(i)=\phi(red^{-1}(i))$$ for any $1\leq i\leq n-|B|$.
Then $$[\red(\tau),\phi']\in\mathfrak{S}_{n-|B|}(y)$$ and there are
$b_{n-|B|}(y,0)$ ways to form the pairs $[\red(\tau),\phi']$.

Note that $1\leq |B|\leq n$ and $B\setminus\{c_l\}\subseteq
CDES_{n+1}(\pi)$. For any $k\leq i\leq l-1$, let
$\theta(c_i)=\phi(c_i)$. Then $\theta$ is a map from the set
$\{c_{k},\ldots,c_{l-1}\}$ to $\{1,2,\ldots,y-1\}$. So there are
${\binom{n}{|B|}}$ ways to form the set $B$ and $(y-1)^{|B|-1}$ ways
to form the mapping $\theta$.

This provides the term
\begin{eqnarray*}\sum\limits_{i=1}^nb_{n-i}(y,0){\binom{n}{i}}(y-1)^{i-1}.\end{eqnarray*}

{\bf Case 2.} $k'\geq 1$

Let
$$\sigma'=(1,c_1,\ldots,c_{k'})\text{ and }\pi'=\sigma'\cdot
\tau.$$
    Then $\pi'$ is a permutation defined on the set $[n+1]\setminus B$, where $$B=\{c_{k'+1},\ldots,c_l\},$$  and
$$\red(\pi')\in\mathfrak{S}_{n+1-|B|}.$$ Define a map
$\phi':[n+1-|B|]\mapsto\{+1,-1\}$ by letting
 $$\phi'(i)=\phi(\red^{-1}(i))$$ for any $1\leq i\leq n+1-|B|$.
Then $$[\red(\pi'),\phi']\in\mathscr{D}_{n+1-|B|}(y)$$ and there are
$b_{n+1-|B|}(y,0)$ ways to form the pairs $(\red(\pi'),\phi')$.

Note that $1\leq |B|\leq n-1$ and $B\setminus\{c_l\}\subseteq
CDES(\pi)$. For any $k\leq i\leq l-1$, let $\theta(c_i)=\phi(c_i)$.
Then $\theta$ is a map from the set $\{c_{k'+1},\ldots,c_{l-1}\}$ to
$\{1,2,\ldots,y-1\}$. So there are ${\binom{n}{|B|}}$ ways to form
the set $B$ and $(y-1)^{|B|-1}$ ways to form the map $\theta$.

This provides the term
\begin{eqnarray*}\sum\limits_{i=1}^{n-1}b_{n+1-i}(y,0){\binom{n}{i}}(y-1)^{i-1}.\end{eqnarray*}

Hence, we have
\begin{eqnarray*}b_{n+1}(y,0)&=&\sum\limits_{i=1}^{n}{\binom{n}{i}}b_{n-i}(y,0)(y-1)^{i-1}+\sum\limits_{i=1}^{n-1}{\binom{n}{i}}b_{n+1-i}(y,0)(y-1)^{i-1}.\end{eqnarray*}

\section{Proof of Theorem \ref{theorem-bijection-p-m}}\label{section07}

\begin{lemma}\label{lemma-cyc-des-callan-match} There is a bijection $\Theta_n$ from the set of cyclic
negative cycle descent  permutations  of $[n]$ to the set of
connected Callan perfect matchings of $\mathbb{P}_n$.
\end{lemma}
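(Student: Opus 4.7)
The plan is to construct the bijection $\Theta_n$ by induction on $n$, using the position of the largest index $n$ in the cycle as the inductive handle. The base case $n=1$ is immediate: the only cyclic permutation on $[1]$ is $(1)$ (with vacuous sign data), and the only connected Callan matching of $\mathbb{P}_1$ is the single vertical edge $\{(1,0),(1,1)\}$, so $\Theta_1$ is forced.

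For the inductive step with $n\ge 2$, suppose $\Theta_{n-1}$ has been defined. Given a cyclic negative cycle descent permutation $(\pi,\phi)$ with $\pi=(1,c_1,\ldots,c_{n-1})$ in standard cycle form, let $k$ be the unique index with $c_k=n$. Deleting $n$ from the cycle yields a shorter cyclic permutation $\pi'$ on $[n-1]$, and the signs in $\phi$ transfer to a compatible sign map $\phi'$ on the cycle descents of $\pi'$ (with an adjustment to be spelled out when $c_k=n$ is itself a cycle descent, since the reduced step $c_{k-1}\to c_{k+1}$ may now be an ascent or a descent that changes the set of cycle descents). By induction, $\Theta_{n-1}(\pi',\phi')$ is a connected Callan matching $M'$ of $\mathbb{P}_{n-1}$. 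I then build $\Theta_n(\pi,\phi)$ by inserting the two copies of $n$ into $M'$: the edge of $M'$ corresponding to the reduced step $c_{k-1}\to c_{k+1}$ is replaced by two new edges passing through $(n,0)$ and $(n,1)$, and the choice among top arc, bottom arc, and downline for these edges is dictated by the sign of $n$ (when $c_k=n$ is a cycle descent) and by the relative order of $c_{k-1}$ and $c_{k+1}$. Because $n$ is the largest index, no edge incident to $n$ can be an upline (an upline at $n$ would require matching $(n,1)$ with some $(j,0)$ satisfying $n<j$, which is impossible), so the Callan property is automatically preserved, and connectivity follows because the split at a single edge of $M'$ leaves the underlying cycle of $\mathcal{G}$ connected.

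For the inverse map, given a connected Callan matching $M$ on $\mathbb{P}_n$ with $n\ge 2$, the no-upline condition forces $(n,1)$ to be matched either vertically to $(n,0)$ or to some $(j,1)$ with $j<n$; connectivity rules out the vertical option (which would isolate vertex $n$), so $(n,1)$ lies in a unique top arc. The remaining edge at vertex $n$ joins $(n,0)$ either to a bottom copy (bottom arc) or to a top copy of some smaller index (downline). Contracting these two edges at $n$ into a single edge produces a smaller matching $M'$ on $\mathbb{P}_{n-1}$ which is again connected and Callan, to which the inductive inverse applies to recover $(\pi',\phi')$; reinserting $n$ at the position corresponding to the contracted edge, and reading its sign off the local edge types, reconstructs $(\pi,\phi)$.

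The main obstacle is the case analysis in the insertion/contraction step. There are several local configurations based on whether $c_{k-1}<c_{k+1}$ or $c_{k-1}>c_{k+1}$, on whether $c_k=n$ was a cycle descent in $\pi$, and on the sign of $n$ in that case; in each case one must verify that the replacement is unambiguously determined by the data, preserves the Callan and connectivity conditions, and inverts the contraction. Once these cases are tabulated, bijectivity of $\Theta_n$ follows directly from the induction, and the Callan and connectivity properties propagate automatically through the recursion.
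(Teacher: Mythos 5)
Your inductive scheme has a genuine gap: the deletion/contraction step does not stay inside the set of connected Callan matchings, so the recursion does not close. Concretely, take $n=3$ and the connected Callan matching $M=\{\{(1,0),(3,0)\},\{(2,1),(3,1)\},\{(1,1),(2,0)\}\}$. Here $(3,1)$ lies on a top arc and $(3,0)$ lies on a bottom arc; contracting the two edges at vertex $3$ joins $(1,0)$ to $(2,1)$, and $\{(1,0),(2,1)\}$ is an upline, so the resulting matching $M'$ on $\mathbb{P}_2$ is \emph{not} Callan. Your claim that "contracting these two edges at $n$ produces a smaller matching which is again connected and Callan" is therefore false, and $\Theta_{n-1}^{-1}$ cannot be applied to $M'$. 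The same failure shows up in the forward direction: the unique connected Callan matching of $\mathbb{P}_2$ (two parallel arcs) admits only two edge-splittings at vertex $3$ that avoid uplines, while there are three cyclic negative cycle descent permutations of $[3]$, so the fibers do not match and the map cannot be well-defined by "split one edge of $M'$." Relatedly, your justification that "no edge incident to $n$ can be an upline" inverts the definition: a line $\{(j,0),(n,1)\}$ with $j<n$ \emph{is} an upline (the paper calls $\{(i,0),(j,1)\}$ an upline when $i<j$), which is exactly why the Callan property is not automatic and why the bottom arc of $M'$ can never be split at $n$. The general obstruction is that when $(n,1)$ sits on a top arc to $(j,1)$ and $(n,0)$ on a bottom arc to $(j',0)$ with $j'<j$, contraction creates an upline; such configurations genuinely occur, so an induction on the largest letter would need a substantially different (non-local) insertion rule.

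For comparison, the paper's proof is a direct global construction with no induction: it cuts the cycle $(1,c_2,\ldots,c_n)$ into maximal blocks ending at positively signed letters (each block is automatically decreasing because negative signs are confined to cycle descents), turns each block into a chain of downlines, joins consecutive blocks alternately by bottom arcs and top arcs, and closes up at $(1,1)$; the inverse deletes the edge at $(1,1)$, reads the resulting Hamiltonian path, and recovers the blocks from the positions of the arcs. That construction is what correctly accounts for matchings like $M$ above, which your contraction would discard.
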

\begin{proof} Let $(\pi,\phi)$ be a cyclic
negative cycle descent  permutations of $[n]$. Then there is exactly
one cycle $C$ in the standard cycle decomposition of $\pi$. Suppose
$$C=(c_1,c_2,\ldots,c_n)$$ where $c_1=1$. Erase the parentheses,
draw a bar after each element  $c_i$ which has sign $+1$, and add a
bar before $c_1$. Regard the numbers between two consecutive bars as
``blocks". So, we decompose $(\pi,\phi)$ into a sequence of blocks
$$B_1,B_2,\ldots,B_k.$$ Suppose that the $i$-th block $B_i$ contains $t_i$ number
$b_{i1},\ldots,b_{it_i}$ with $b_{i1}>\ldots>b_{it_i}$. We construct
a perfect matchings $M$ as follows:
\begin{itemize}
\item Step 1. For every block $B_i$, we connect the vertex $(b_{i,j},0)$ to the vertex $(b_{i,j+1},1)$ as a downline of $M$  for any $1\leq j\leq t_i-1$.
\item Step 2. For any odd integer $i\in\{1,2,\ldots,k-1\}$, we connect the vertex $(b_{i,t_i},0)$ to the vertex $(b_{i+1,t_{i+1}},0)$ as an arc of $M$.
For any even integer $i\in\{1,2,\ldots,k-1\}$, we connect the vertex
$(b_{i,1},1)$ to the vertex $(b_{i+1,1},1)$ as an arc of $M$.
\item Step 3. If $k$ is odd, we connect the vertex $(b_{1,1},1)=(1,1)$ to the vertex
$(b_{k,t_k},0)$ as a downline of $M$; otherwise, connect the vertex
$(b_{1,1},1)=(1,1)$ to the vertex $(b_{k1},1)$ as an arc of $M$.
\end{itemize}
It is easy to check that $M$ is connected and has no uplines. So,
$M$ is a connected Callan perfect matching. Define $\Theta_n$ as a
map from the set of cyclic negative cycle descent permutations of
$[n]$ to the set of connected Callan perfect matchings of
$\mathbb{P}_n$ by letting $\Theta_n(\pi,\phi)=M$. Let $(\pi,\phi)$
and $(\pi',\phi')$ be two different cyclic negative cycle descent
permutations  of $[n]$. Then the sequence of  blocks of $(\pi,\phi)$
and $(\pi',\phi')$ are different. This implies
$\Theta_n(\pi,\phi)\neq \Theta_n(\pi',\phi')$, and so the map
$\Theta_n$ is a bijection.

Conversely, let $M$ be a connected Callan perfect matching of
$\mathbb{P}_n$. Delete the edge incident with the vertex $(1,1)$
from $M$, identify two vertices $(i,0)$ and $(i,1)$ in $M$ as a new
vertex $i$ for each $i=1,2,\ldots,n$, denote by $\mathcal {G}^*(M)$
the graph obtained from $M$. Then the graph $\mathcal {G}^*(M)$ is a
path on the vertex set $[n]$ and can be written as
$$a_1a_2\ldots a_n$$ where  $a_1=1$ and the set
$\{a_1a_{2},a_3a_{4},\ldots,a_{n-1}a_{n}\}$ is the edge set of
$\mathcal {G}^*(M)$. Draw a bar after each number $a_i$ which
satisfies either (1) $i=n$ or (2) there is an arc of $M$ in
$$\{\{(a_i,0),(a_{i+1},0)\}, \{(a_i,1),(a_{i+1},1)\}\},$$ and add a
bar before $a_1$. Regard the numbers between two consecutive bars as
``blocks". So, we obtain a sequence of blocks
$$B_1',B_2',\ldots,B_k'.$$ We construct  a cyclic negative
cycle descent  permutations $(\pi,\phi)$ of $[n]$ as follows:
\begin{itemize}
\item Step $1'$. For each block $B_i'$, we write the numbers in $B_i'$ in decreasing order, denote by $\tau_i$ the obtained
sequence, and let $$\pi=(\tau_1,\tau_2,\ldots,\tau_k).$$
\item Step $2'$. For any number $j\in[n]$, suppose $j$ is in a
block $B_i'$ for some $1\leq i\leq k$. If $j$ is the smallest number
in $B_i'$, then let the sign of $j$ be $+1$; otherwise, let the sign
of $j$ be $-1$. In fact, this defines a map $\phi$ from $[n]$ to
$\{+1,-1\}$.
\end{itemize}
Then $(\pi,\phi)$ is a cyclic negative cycle descent permutations of
$[n]$.
\end{proof}

\begin{ex} Let us consider a cyclic negative cycle descent permutation $$(1^+6^-4^-3^+2^+8^-7^-5^+)$$ of
$[8]$. We erase the parentheses, draw a bar after each element
 which has sign $+1$, and add a bar before $1$. Thus we
obtain
$$|1|643|2|875|$$ and the sequence of blocks $$B_1=1,B_2=643,B_3=2,B_4=875.$$
By Steps 1,2,and 3 in the proof of Lemma
\ref{lemma-cyc-des-callan-match}, we construct the following dot
diagram.
\begin{center}\includegraphics[width=6.5cm,height=2cm]{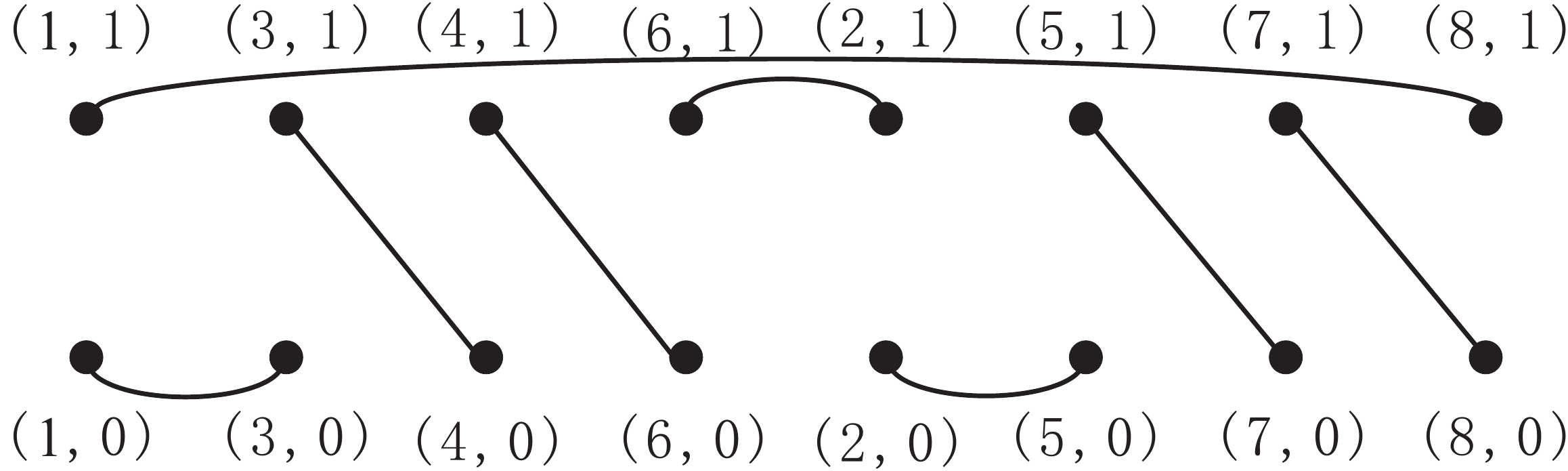}\\
Fig.4. A dot diagram constructed by Step 1,2, and 3 in the proof of
Lemma \ref{lemma-cyc-des-callan-match}  \end{center} We obtain a
connected Callan perfect matching $M$  as follows:
\begin{center}\includegraphics[width=6.5cm,height=2cm]{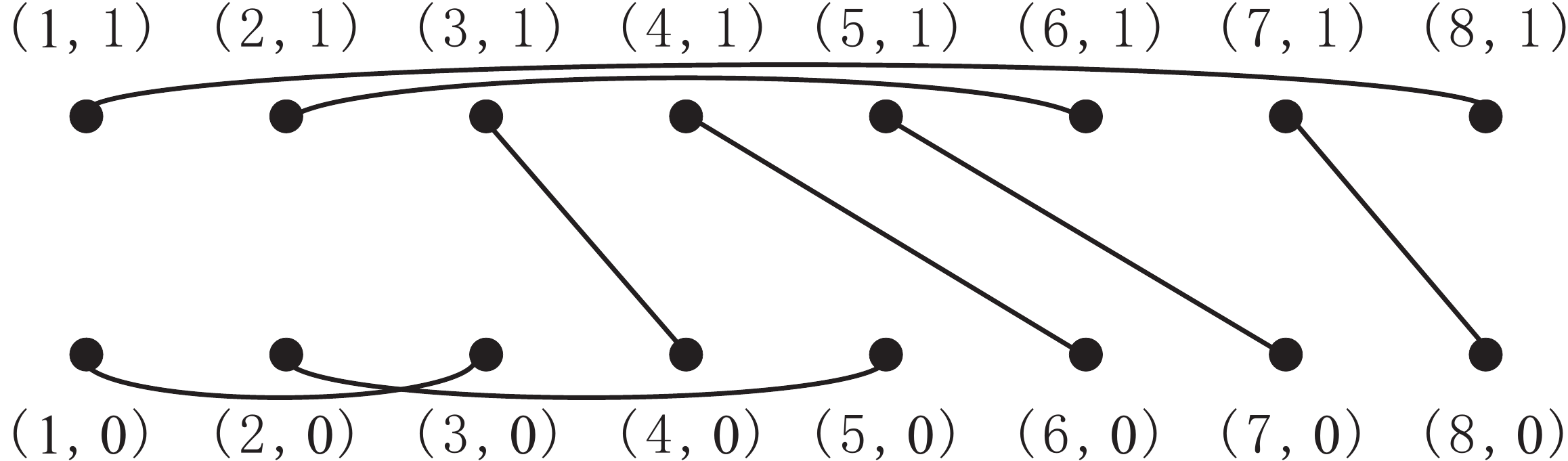}\\
Fig.5. A connected Callan perfect matching $M$ corresponding with
$(1^+6^-4^-3^+2^+8^-7^-5^+)$  \end{center}

Conversely, let us consider the connected perfect matching $M$ in
Fig.5. After deleting the edge $\{(1,1),(8,1)\}$, we can obtain the
graph
$$\mathcal{G}^*(M)=13462578$$ which has the edge set
$\{13,34,46,62,25,57,78\}$. Note that there is $3$ arcs
$$\{(1,0),(3,0)\},\{(6,1),(2,1)\},\{(2,0),(5,0)\}$$ in $M$. So, we
draw bars after the numbers $1,6,2,8$, and add a bar before $1$.
Thus we obtain
$$|1|346|2|578|$$ and the sequence of blocks
$$B_1'=1,B_2'=346,B_3'=2,B_4'=578.$$
By Steps $1'$ and $2'$ in the proof of Lemma
\ref{lemma-cyc-des-callan-match}, we construct a cyclic negative
cycle descent permutation $$(1^+6^-4^-3^+2^+8^-7^-5^+)$$ of $[8]$.
\end{ex}

\noindent{\bf A bijection proof of Theorem
\ref{theorem-bijection-p-m}}
\begin{proof} Let $(\pi,\phi)$ be a
negative cycle descent  permutations  of $[n]$. Suppose that
$\pi=C_1\ldots C_k$ is the  standard cycle decomposition of $\pi$
and $$C_i=(c_{i1},\ldots,c_{i,l_i})$$ for each $i=1,2,\ldots,k$.
Then
$$\red(C_i)\in\mathfrak{S}_{l_i},$$
Define a map
  $\phi^i:[l_i]\mapsto\{+1,-1\}$ by letting $$\phi^i(j)=\phi(\red^{-1}(j)),$$ i.e., the sign
of $\red(c_{ij})$ is the same as that of $c_{ij}$. Then
$$(\red(C_i),\phi^i)$$ is a cyclic negative cycle descent  permutations
of $[l_i]$. By Lemma \ref{lemma-cyc-des-callan-match}, we have
$\Theta_{l_i}(\red(C_i),\phi^i)$ is a connected Callan perfect
matching. For any $1\leq j\leq l_i$, we replace the labels $(j,0)$
and $(j,1)$ of vertices in $\Theta_{l_i}(\red(C_i),\phi^i)$ with
$(\red^{-1}(j),0)$ and $(\red^{-1}(j),1)$ respectively and denote by
$M^i$ the perfect matching obtained from
$\Theta_{l_i}(\red(C_i),\phi^i)$. At last, let $$M=M^1\cup M^2\cup
\ldots \cup M^k$$ where the notation $M\cup M'$ denotes the union of
two perfect matchings $M$ and $M'$ such that the vertex set of
$M\cup M'$ is $V(M)\cup V(M')$ and the edge set of $M\cup M'$ is
$E(M)\cup E(M')$. So $M$ is a Callan perfect matchings of
$\mathbb{P}_n$. Define $\Gamma_n$ as a map from the set of
 negative cycle descent permutations of $[n]$ to the set of
 Callan perfect matchings of $\mathbb{P}_n$ by letting
$\Gamma_n(\pi,\phi)=M$. Note that $\Gamma_n$ is injective, and so it
is a bijection.

By the definition of $\Gamma_n$, it is easy to see that
$$\com(\Gamma_n(\pi,\phi))=\cyc(\pi)\text{ and }
\ver(\Gamma_n(\pi,\phi))=\fix(\pi).$$ If the vertices $(1,1)$ and
its
  partner are in the same row, then
  $\down(\Gamma_n(\pi,\phi))=\nega(\pi,\phi)$; otherwise,
  $\down(\Gamma_n(\pi,\phi))=\nega(\pi,\phi)+1$.
\end{proof}

\begin{ex} Let us consider a negative cycle descent  permutation $$(1^+6^-3^+4^+)(2^+8^-7^+)(5^+)$$ of
$[8]$. We draw the perfect matchings $M^1$, $M^2$ and $M^3$
corresponding with the cycles $C_1$, $C_2$ and $C_3$ respectively as
follows:
$$\begin{array}{|c|c|c|c|}\hline Cycles&C_1&C_2&C_3\\
\hline &(1^+6^-3^+4^+)&(2^+8^-7^+)&(5^+)\\
\hline \text{Perfect
~~matchings}&M^1&M^2&M^3\\
\hline
&\includegraphics[width=3cm,height=1.8cm]{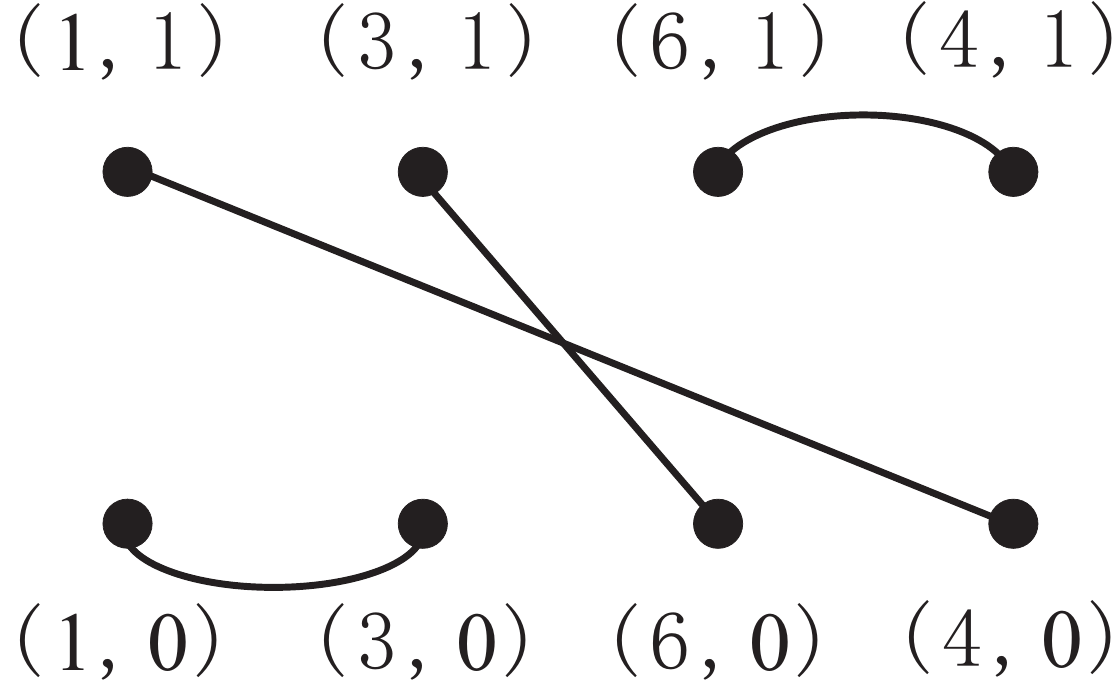}&\includegraphics[width=2.5cm,height=1.8cm]{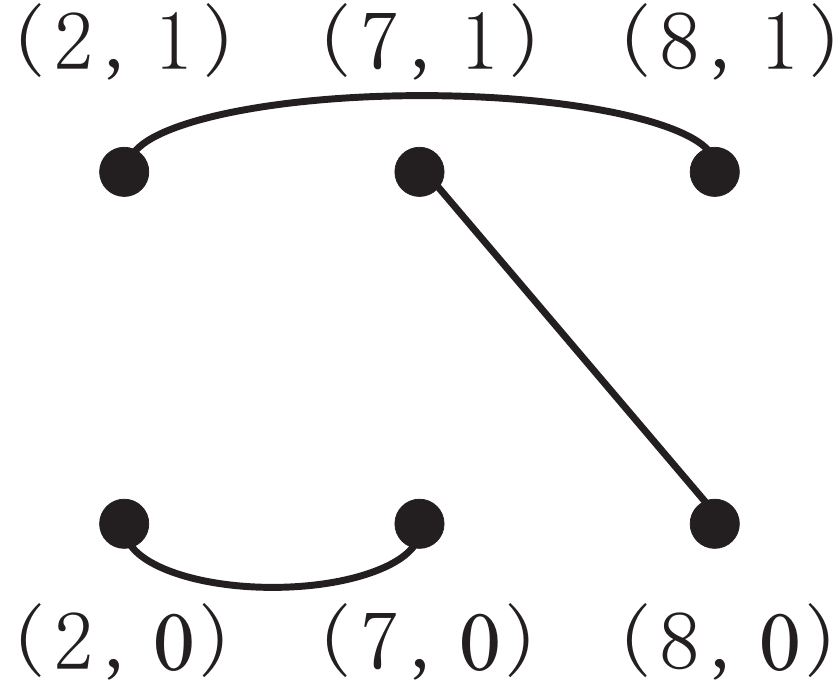}&\includegraphics[width=0.6cm,height=1.8cm]{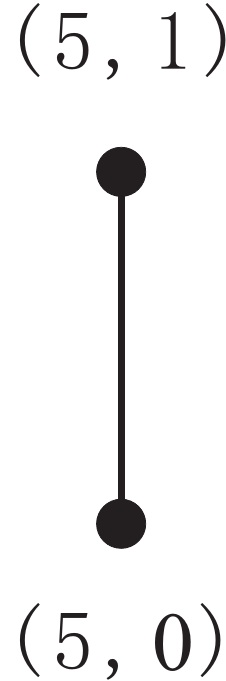}\\\hline\end{array}$$
Finally, we obtain a Callan perfect matching $M=M^1\cup M^2\cup M^3$
which is exactly that in Example \ref{example-callan-matching}.
\end{ex}

\end{document}